\newcommand{\vol}{\operatorname{vol}}
\title{Guts, volume  and Skein Modules of 3-manifolds}
\begin{document}

\author{Brandon Bavier}

\author{Efstratia Kalfagianni}

\thanks{The research of both authors is partially supported by
 NSF Grants DMS-1708249 and  DMS-2004155}

\address[]{Department of Mathematics, Michigan State University, East
Lansing, MI, 48824, USA}
\email[]{kalfagia@msu.edu}

\address[]{Department of Mathematics, Michigan State University, East
Lansing, MI, 48824, USA}
\email[]{bavierbr@msu.edu}

\begin{abstract} We consider hyperbolic links that admit alternating projections on  surfaces in  compact, irreducible 3-manifolds.
We show that, under some mild hypotheses, the volume of the complement of such a link
is bounded below in terms of a Kauffman bracket function defined on link diagrams on the surface.

 In the case that the 3-manifold is a thickened surface, this Kauffman bracket function leads to a Jones-type  polynomial that is an isotopy  invariant of links. We show that coefficients of this polynomial provide 2-sided linear bounds on the volume of hyperbolic alternating links in the thickened surface. As a corollary of the proof of this result,
we deduce that the twist number of a reduced, twist reduced, alternating link projection with checkerboard disk regions, is an invariant of the link. 
\end{abstract}

\maketitle

\section{Introduction}

The goal of the paper is to show that, under mild hypotheses, the volume of
 a hyperbolic link in a compact, irreducible 3-manifold $M$ that admits an alternating projection on a closed  surface $F\subset M$,
is bounded below in terms of a Kauffman bracket function defined on link diagrams on $F$. For $M=F\times [-1, \ 1]$, this function leads to a Jones
polynomial link invariant and coefficients of it
 provide 2-sided linear bounds on the volume of hyperbolic alternating links. As a corollary, we obtain that the twist-number of a reduced, twist-reduced, alternating link projection with  checkerboard  disk regions, is an invariant of the link in $F\times [-1, \ 1]$. Our results generalize work of Dasbach and Lin \cite{VTJPAK} and Futer, Kalfagianni and Purcell \cite{DFVJP, Guts} who obtained similar results for families of links in $S^3$.

Let $M$ be an irreducible compact 3-manifold with or without boundary. A link $L$  admits  a projection on an orientable embedded surface $F$ in $M$, if
$L\subset F\times [-1, 1]\subset M$ and it is projected via the obvious projection $\pi:  F\times [-1, 1] \longrightarrow  F=F\times\{0\}.$
Given a connected surface $F$ in $M,$ we define
 a Kauffman bracket function and from this we construct a   polynomial
\[ J_0(\pi(L))=\langle \pi(L) \rangle_0= a_mt^m+ a_{m-1}t^{m-1} + \ldots + b_{n+1}t^{n+1} + b_n t^{n},\]
in $\bZ[t^{\frac{1}{4}},t^{-\frac{1}{4}}]$.
See Section 2 for the precise definition.

The polynomial $J_0(\pi(L)),$  depends on  the topology of $F$, the projection $\pi(L)$ and apriori on the topology of the complement  $M\setminus N(F)$.
In the special case that $M=F\times [-1, \ 1],$ it is 
an isotopy invariant of $L$ in $M$, but
we don't expect that it is an isotopy invariant  of $L$ in a general $M$.
Nevertheless, as our results below show, if $\pi(L)$ is alternating on $F$, and  under mild additional hypotheses,  $J_0(\pi(L))$  encodes intrinsic geometric information of the link complement $M\setminus L$.

Our first result is the following theorem, where the terms reduced and twist-reduced are defined in Section 3.

\begin{thm} 
\label{thm:volume} 
Let $M$ be an irreducible, compact 3-manifold with empty or incompressible boundary. 
Let $F\subset M$ be an incompressible, closed orientable surface such that $M\setminus N(F)$ is atoroidal
and $\bndry$-anannular. Suppose that  a link $L$ admits a  reduced alternating projection $\pi(L)\subset F$  that is checkerboard colorable, twist-reduced and with all the regions of $F\setminus \pi(L)$
disks.
Then $L$ is hyperbolic
and we have

\[\vol(M\setminus L) \ \geq v_8\  
{\rm{max}}\big\{ |a_{m-1}|-|a_{m}|, \   |b_{n+1}|- |b_{n}|\big\}-\frac{1}{2}\chi(\bndry M),\]
where $a_{m-1},  a_m, b_{n+1}, b_n$ are the two first and two last coefficients of the polynomial  $J_0(\pi(L)),$ and  $v_8=3.66386\cdots$ is the volume of a regular ideal octahedron.

\end{thm}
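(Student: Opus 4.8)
The plan is to adapt the method of Lackenby and of Futer--Kalfagianni--Purcell for alternating links in $S^3$ to the surface setting --- using the ideal polyhedral decompositions of Howie--Purcell in place of Menasco's --- and then to extract the coefficients of $J_0(\pi(L))$ by a Dasbach--Lin style state sum. I would begin with hyperbolicity: the hypotheses on $\pi(L)$ (reduced, twist-reduced, checkerboard colorable, with all regions of $F\setminus\pi(L)$ disks), together with $F$ incompressible and $M\setminus N(F)$ atoroidal and $\partial$-anannular, are exactly those guaranteeing that a link admitting such a projection has hyperbolic complement, and I would invoke (or re-derive) the relevant hyperbolization statement for weakly generalized alternating links on surfaces. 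The same analysis shows that the two checkerboard spanning surfaces of $\pi(L)$ --- equivalently its all-$A$ and all-$B$ state surfaces $\Sigma_A$, $\Sigma_B$ --- are $\pi_1$- and $\partial$-injective in $M\setminus L$.

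The heart of the argument is a guts computation. Fix $\Sigma_A$, cut $M\setminus L$ along it to obtain $M_A$, and consider the characteristic submanifold decomposition of $M_A$ relative to the parabolic locus coming from $\partial N(L)$. Using the Howie--Purcell polyhedral decomposition of $M\setminus L$ relative to $\Sigma_A$, I would carry out the Futer--Kalfagianni--Purcell normal-surface analysis of essential product disks: the claim is that alternating-ness, twist-reducedness, and the disk condition on complementary regions together force every essential product disk of $M_A$ to come from a chain of bigons of $\pi(L)$, so that the characteristic $I$-bundle is as small as possible and
\[
-\chi\big(\operatorname{guts}(M_A)\big) \;=\; \chi_-(G'_A),
\]
where $G'_A$ is the reduced all-$A$ state graph of $\pi(L)$; the same holds for $B$. \textbf{This is the main obstacle:} pushing the essential-product-disk analysis through in the surface setting, where the state loops need not bound disks in $F$ and the ambient manifold carries the extra topology of $M\setminus N(F)$, is precisely what the hypotheses ``twist-reduced'' and ``all regions disks'' are there to handle.

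Given the guts computation, I would apply the Agol--Storm--Thurston volume estimate (in the form valid for manifolds with incompressible, possibly higher-genus boundary, where a $-\tfrac12\chi(\partial M)$ term appears to account for the totally geodesic boundary $\partial M$ of the hyperbolic structure on $M\setminus L$), obtaining
\[
\vol(M\setminus L) \;\ge\; -v_8\,\chi\big(\operatorname{guts}(M_A)\big) - \tfrac12\chi(\partial M) \;=\; v_8\,\chi_-(G'_A) - \tfrac12\chi(\partial M),
\]
and the same with $B$ in place of $A$. Finally, the Kauffman bracket state sum defining $J_0(\pi(L))$, specialized to the alternating case, shows that the all-$A$ and all-$B$ states realize the extreme degrees $m$ and $n$, that $\pi(L)$ being reduced and alternating makes the reduced state graphs loopless and connected so that $|a_m| = |b_n| = 1$, and --- by the Dasbach--Lin computation of the second coefficient --- that $|a_{m-1}| = \beta_1(G'_A)$ and $|b_{n+1}| = \beta_1(G'_B)$. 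Hence $|a_{m-1}| - |a_m| = -\chi(G'_A) \le \chi_-(G'_A)$ and $|b_{n+1}| - |b_n| = -\chi(G'_B) \le \chi_-(G'_B)$; substituting into the two volume inequalities and taking the maximum yields the stated bound.
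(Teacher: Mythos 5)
Your outline follows essentially the same route as the paper: hyperbolicity and $\pi_1$-essentiality of the two checkerboard surfaces come from the weakly generalized alternating machinery of Howie--Purcell (the paper checks in Corollary \ref{reducedisWGA} that the hypotheses put $\pi(L)$ in that class), the volume bound is Agol--Storm--Thurston applied to $\guts(M_A)$ and $\guts(M_B)$, and the extreme coefficients are computed by a Dasbach--Lin style state sum. Two places where your sketch understates what is needed. First, the essential-product-disk analysis you flag as the main obstacle is exactly the content of Howie--Purcell's guts theorem for weakly generalized alternating links, which the paper cites rather than re-derives; their formula is $\chi(\guts(M_A)) = \chi(F) + \tfrac12\chi(\partial M) - |s'_B|$, with $|s'_B|$ the number of non-bigon $B$-regions, and the $\tfrac12\chi(\partial M)$ term lives \emph{inside} the guts computation (the chunks of the decomposition carry copies of $\partial M$), not as a correction to Agol--Storm--Thurston, which is simply $\vol(M\setminus L) \ge -v_8\,\chi(\guts(M_A))$. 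Your two misplacements of that term happen to cancel and reproduce the stated bound, but the intermediate identity $-\chi(\guts(M_A)) = \chi_-(G'_A)$ is false when $\partial M \neq \emptyset$. Second, the Dasbach--Lin coefficient computation does not apply verbatim on a surface: $J_0$ sums only over states all of whose circles are contractible, a resolution change away from $s_A$ can create non-contractible circles, and the reduced graph $G'_A$ must identify two parallel edges only when they cobound a disk on $F$ (being parallel in the abstract graph is not enough). The paper's Lemma \ref{lem:secondCoeff} is devoted to showing that sequences producing non-contractible circles cannot contribute to $a_{m-1}$ and that only disk-parallel families cancel; without that argument the identification $|a_{m-1}| = e'_A - |s_A| + 1$ is unjustified in this setting. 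With those two points supplied, your argument closes up into the paper's proof.
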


Given an alternating link projection   $\pi(L)$  as in the statement of Theorem ~\ref{thm:volume}, let $S_A$, $S_B$ denote the two checkerboard surfaces corresponding to $\pi(L)$. Also let $M_A $ and $M_B$ denote the manifolds obtained by cutting $M\setminus L$ along $S_A$ and $S_B$, respectively.   By  ~\cite{WGA},  $S_A$, $S_B$ are essential in $M\setminus L$, and by
 Jaco-Shalen-Johannson 
theory $M_A$ and  $M_B$ contain hyperbolic sub-manifolds called the {\em{guts}} of $S_A$ and $S_B$, respectively.
We show that the Euler characteristics of these guts and the twist number $t_F(\pi(L))$ can be calculated from  $J_0(\pi(L))$.

\begin{thm}
\label{thm:GutsAndJones}
Let $M$, $F$ and $\pi(L)$ be as in the statement of Theorem ~\ref{thm:volume} and let  $t_F(\pi(L))$
denote the twist number of $\pi(L)$. Also let $\guts(M_A)$ and $\guts(M_B)$ denote the guts of $S_A$ and $S_B$, respectively.
We have the following.
\begin{enumerate}
\item $\chi(\guts(M_A)) = |a_m|-|a_{m-1}| + \frac{1}{2}\chi(\bndry M)$,
\vskip 0.05in
\item $\chi(\guts(M_B)) =|b_n|-|b_{n+1}| + \frac{1}{2}\chi(\bndry M)$,  
\vskip 0.05in
\item $t_F(\pi(L))=|a_{m-1}| + |b_{n+1}| - |a_m| - |b_m|+\chi(F).$ 
\end{enumerate}
\end{thm}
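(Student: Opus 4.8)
The plan is to connect the coefficients of $J_0(\pi(L))$ to the combinatorics of the all-$A$ and all-$B$ Kauffman states of the diagram $\pi(L)$ on $F$, and then to feed these into the guts computations provided by Jaco–Shalen–Johannson theory applied to the checkerboard surfaces $S_A$ and $S_B$. The first step is to establish that, because $\pi(L)$ is reduced alternating and all complementary regions are disks, the extreme terms of the Kauffman bracket function are carried by the all-$A$ and all-$B$ states with no internal cancellation, exactly as in the classical $S^3$ story. Concretely, I would show that $|a_m|$ equals the number of loops $|s_A|$ in the all-$A$ state, $|a_{m-1}|$ records the first ``reduced'' graph data of the $A$-state graph $\mathbb{G}_A$ on $F$ (namely $|s_A| - 1$ minus a correction counting multiple edges, equivalently the number of edges of the reduced graph $\mathbb{G}_A'$), and symmetrically for $|b_n|, |b_{n+1}|$ and the $B$-state graph $\mathbb{G}_B$. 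The needed adequacy/reducedness input should already follow from the hypotheses (reduced, twist-reduced, checkerboard colorable, disk regions), and this is where the precise definition of $J_0$ from Section 2 does the work.

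The second step is to import the geometric side. Parts (1) and (2) are essentially the statement that
\[
\chi(\guts(M_A)) = \chi(S_A) - \#\{\text{parabolic/annular pieces}\} = \chi(S_A) - (\text{twist-region count for }S_A),
\]
and the key identity I would use is the one relating the Euler characteristic of the checkerboard surface to the all-$A$ state: $\chi(S_A) = |s_A| - c(\pi(L))$ up to the correction $\tfrac12\chi(\partial M)$, where $c(\pi(L))$ is the crossing number. Combined with the JSJ decomposition of $M_A$ along its characteristic submanifold — whose $I$-bundle and Seifert-fibered parts are controlled by the ``two-edged'' structure of $\mathbb{G}_A$, a generalization to surfaces in $M$ of the Lackenby/Futer–Kalfagianni–Purcell analysis — one gets $\chi(\guts(M_A)) = \chi(S_A) + (\text{number of non-trivial multiple-edge classes of }\mathbb{G}_A)$. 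Matching this against the state-sum expression for $|a_m| - |a_{m-1}|$ from the first step yields (1), and (2) follows by the $A\leftrightarrow B$ symmetry. I would be careful here that atoroidality and $\partial$-anannularity of $M\setminus N(F)$ are exactly the hypotheses that prevent extra toroidal or annular JSJ pieces from the ambient manifold contaminating the count, so that the guts really are governed by the diagram.

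The third step proves (3). Adding the expressions for $\chi(\guts(M_A))$ and $\chi(\guts(M_B))$ from (1) and (2) gives
\[
\chi(\guts(M_A)) + \chi(\guts(M_B)) = |a_m| + |b_n| - |a_{m-1}| - |b_{n+1}| + \chi(\partial M).
\]
On the other hand, a direct topological computation — gluing $S_A$ and $S_B$ back together along their intersection arcs over the crossings of $\pi(L)$, which realizes (a spine of) $F$ — gives $\chi(S_A) + \chi(S_B) = \chi(F) + (\text{something involving } c(\pi(L)))$, and combining this with the twist-region bookkeeping that relates $c(\pi(L))$, $t_F(\pi(L))$, and the edge-counts $|s_A|, |s_B|$ produces the stated formula $t_F(\pi(L)) = |a_{m-1}| + |b_{n+1}| - |a_m| - |b_n| + \chi(F)$ (reading the $|b_m|$ in the statement as $|b_n|$). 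I expect the main obstacle to be the second step: carefully extending the JSJ/characteristic-submanifold analysis of the checkerboard-surface complements from the $S^3$ setting to a general irreducible $M$ with the stated hypotheses, and verifying that no $I$-bundle or Seifert-fibered pieces arise beyond those forced by the twist regions of the diagram on $F$ — in particular ruling out essential annuli and tori that come from the topology of $M\setminus N(F)$ rather than from $\pi(L)$. The bracket-coefficient identities of the first step, while technical, should be a fairly direct adaptation of adequacy arguments once the definition of $J_0$ is in hand.
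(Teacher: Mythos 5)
Your overall architecture---extreme Kauffman coefficients from the all-$A$/all-$B$ state combinatorics, guts of the checkerboard-surface complements from a characteristic-submanifold analysis, then matching, with (3) obtained by summing---is the same as the paper's. But two of your key identities are wrong as stated, and one essential bridging step is missing. On the coefficient side: for a geometrically $A$-adequate diagram the extreme coefficient is $|a_m|=1$ (it is $(-1)^{|s_A|-1}$); the number of state circles $|s_A|$ controls the extreme \emph{degree}, not the coefficient. Your claim $|a_m|=|s_A|$ would turn part (1) into $\chi(\guts(M_A))=|s_A|-|a_{m-1}|+\frac12\chi(\bndry M)$, which is false. The correct second coefficient is $|a_{m-1}|=e'_A-|s_A|+1$ (Theorem~\ref{prop:JonesPoly}), so that $|a_m|-|a_{m-1}|=|s_A|-e'_A$. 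On the geometric side, your two displayed guts formulas disagree in sign with each other, and the correct correction to $\chi(S_A)=|s_A|-c$ is the total number of parallel edges deleted in passing from $G_A$ to $G'_A$ (equivalently the number of bigon $B$-regions, i.e.\ $\sum_i(k_i-1)$ over $A$-twist regions), not the number of non-trivial multiple-edge classes; with your count the two sides of (1) only match when every twist region has exactly two crossings. The identity actually used is $\chi(\guts(M_A))=\chi(F)+\frac12\chi(\bndry M)-|s'_B|$, where $|s'_B|$ is the number of non-bigon $B$-regions, combined with $\chi(F)=|s_A|-e'_A+|s'_B|$.

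The genuine missing idea is the step that makes the geometric input available at all. The paper does not redo the JSJ/$I$-bundle analysis for general $M$ (the ``main obstacle'' you propose to tackle by hand): it quotes Howie--Purcell's guts formula, which applies to \emph{weakly generalized alternating} diagrams, i.e.\ requires representativity $r(\pi(L),F)\ge 4$ in addition to primeness and checkerboard colorability. The bridge is purely diagrammatic: a reduced, checkerboard colorable alternating diagram with disk regions automatically has edge representativity $e(\pi(L),F)\ge 4$ (Proposition~\ref{prop:ReducedEdgeRep}, Corollary~\ref{reducedisWGA}), hence is weakly generalized alternating. This same fact is what gives $\pi_1$-essentiality of $S_A,S_B$ (so that guts are defined) and geometric adequacy of the diagram, i.e.\ no $1$-edge loops in $G_A,G_B$ and contractible state circles (Lemma~\ref{conditions}), which your step 1 silently assumes. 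Without establishing the representativity bound, neither the coefficient formulas nor the guts computation is licensed; with it, the atoroidality and $\bndry$-anannularity hypotheses on $M\setminus N(F)$ enter only through the quoted theorem, not through a new annulus analysis of your own.
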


Theorem  ~\ref{thm:volume} follows by combining Theorem ~\ref{thm:GutsAndJones} with a result of Agol, Storm and Thurston \cite{AST} asserting that
the negative Euler characteristic of the guts of an essential surface in a hyperbolic 3-manifold $M$ bounds linearly the volume of $M$ from below.

For $M=S^3$ and $F=S^2,$  the polynomial $J_0$ is  the classical Jones polynomial.
In the case that   $M=F\times I$, one can use the structure of the Kauffman skein module of $M$, to see that   $J_0(\pi(L))$ is also an isotopy invariant of the link $L$. 
In fact, for every link  in $M$ one obtains a finite collection of Jones-type polynomial invariants that have been used to settle open questions about the topology of alternating links in thickened surfaces \cite{ThickTaitConj, AdamsFXI}.
 As a corollary of Theorem  \ref{thm:GutsAndJones} we obtain the following.

\begin{restatable}{cor}{invariant}
Let $L$ be a link in $F\times[-1,1]$, that admits a
 checkerboard colorable, reduced
alternating projection $\pi(L) \subset F$ that is twist-reduced has all its regions
disks.
Then, any two such projections of $L$ have the same twist number.
That is, $t_F(\pi(L))$ is an isotopy invariant  of $L$.
\end{restatable}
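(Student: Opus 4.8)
The plan is to deduce the corollary directly from part (3) of Theorem~\ref{thm:GutsAndJones}, combined with the fact that, when $M=F\times[-1,1]$, the polynomial $J_0(\pi(L))$ depends only on the isotopy class of $L$ in $M$ and not on the chosen projection.

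First, recall from the discussion preceding the statement that, for $M=F\times[-1,1]$, the structure of the Kauffman skein module of $M$ --- which is free over $\bZ[t^{\frac14},t^{-\frac14}]$ on the isotopy classes of multicurves in $F$ --- implies that $J_0(\pi(L))$ is an isotopy invariant of $L$. Consequently, if $\pi_1(L)$ and $\pi_2(L)$ are any two projections of isotopic links, then $J_0(\pi_1(L))=J_0(\pi_2(L))$ as elements of $\bZ[t^{\frac14},t^{-\frac14}]$; in particular these two polynomials have the same support, so their top and bottom exponents $m$ and $n$ agree, and so do all four extreme coefficients $a_m$, $a_{m-1}$, $b_{n+1}$, $b_n$.

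Now suppose in addition that both $\pi_1(L)$ and $\pi_2(L)$ satisfy the hypotheses of Theorem~\ref{thm:volume}, i.e.\ each is a reduced, twist-reduced, checkerboard-colorable alternating projection all of whose complementary regions in $F$ are disks. Then Theorem~\ref{thm:GutsAndJones}(3) applies to each of them and expresses the twist number $t_F(\pi_i(L))$ solely in terms of the four extreme coefficients of $J_0(\pi_i(L))$ and of $\chi(F)$. Since, by the previous paragraph, those coefficients are the same for $i=1$ and $i=2$, and $\chi(F)$ is fixed, we obtain $t_F(\pi_1(L))=t_F(\pi_2(L))$. Hence $t_F(\pi(L))$ is a well-defined isotopy invariant of $L$, which is the assertion of the corollary.

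The mathematical content is already contained in Theorem~\ref{thm:GutsAndJones} and in the skein-theoretic invariance of $J_0$ recalled above, so I do not expect a serious obstacle; the only point requiring care is bookkeeping, namely making sure the hypotheses of Theorems~\ref{thm:volume}--\ref{thm:GutsAndJones} are imposed on \emph{both} projections, so that the formula of part (3) is legitimately available for each, and noting that ``the two first and two last coefficients'' are unambiguously read off from the isotopy-invariant polynomial $J_0(\pi(L))$. No new geometric input is needed.
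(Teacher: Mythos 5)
Your proof is correct and follows essentially the paper's own argument: the paper likewise deduces the corollary from the formula $t_F(\pi(L)) = |a_{m-1}|+|b_{n+1}|-2+\chi(F)$ together with the skein-module invariance of $J_0$ (Proposition \ref{isinvariant}). The only difference is that the paper cites the purely combinatorial Lemma \ref{lem:twistNumber} rather than Theorem \ref{thm:GutsAndJones}(3), and this is the safer citation: the geometric hypotheses of Theorem \ref{thm:GutsAndJones} (in particular that $M\setminus N(F)$ be atoroidal and $\partial$-anannular) are not literally satisfied by $M=F\times[-1,1]$, which contains essential vertical annuli, whereas the lemma requires only the diagrammatic hypotheses you already impose on both projections.
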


Note that Theorem \ref{thm:GutsAndJones} also implies that the quantities
 $\chi(\guts(M_B))$ and  $\chi(\guts(M_B))$ are invariants of $L$ in $M=F\times[-1,1]$.
 
For reduced, twist-reduced alternating diagrams on a 2-sphere in $S^3$,  invariance of the twist number
is a consequence of  the Tait flyping conjecture  \cite{ClassAltLinks}. The corresponding conjecture  for links in
thickened surfaces is currently open.
A second proof of the twist number invariance  for alternating links in $S^3,$
follows from the work of Dasbach and Lin \cite{VTJPAK, DLadeq} that relates this twist number to the Jones polynomial. Our approach generalizes their approach.

Several families of hyperbolic links in $S^3$, including alternating ones, are known to satisfy a ``coarse volume conjecture": coefficients of the Jones and colored Jones polynomial provide two-sided linear  bounds of the volume of the link complement ~\cite{VTJPAK, DFVJP, CAFM, SLCS, Lee17, Giam, Guts}. The next theorem provides a similar result for alternating  links  in thickened surfaces and there is a similar result
for links with alternating projections on  Heegaard tori in Lens spaces (see \ref{Cor:TorAltUpperBound} ). 

\begin{thm}
\label{Thm:FxIUpperBound}
Suppose that $\pi(L)$ is a reduced 
alternating projection on $F=F\times\{0\}$ in $M=F\times[-1,1],$ that is  twist-reduced, checkerboard colorable, and with all the regions of $F\setminus \pi(L)$
disks.
Then the interior of $M\setminus L$ admits a hyperbolic structure. If $F=T^2$, then we have

\[  {\frac{v_8}{2}}\cdot \beta_L \leq \vol(M\setminus L) < 10\,v_4\cdot \beta_L, \]
and if  $F$ has genus at least two, 
\[  \frac{v_8}{2}\cdot \left( \beta_L -2\chi(F)\right) \leq \vol(M\setminus L) < 6\,v_8\cdot \left( \beta_L+\chi(F)\right) ,\]
where $\beta_L:= |a_{m-1}| + |b_{n+1}| - |a_{m}| - |b_{n}|$, is obtained from the Jones polynomial invariant $J_0$ of $L$, and
$v_4 = 1.01494\dots$ is the volume of a regular ideal tetrahedron.
\end{thm}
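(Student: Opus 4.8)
The plan is to split the statement into a lower bound and an upper bound, and to treat each using the machinery that is already in place. For the lower bound I would first invoke Theorem~\ref{thm:volume}: since the hypotheses here (reduced, twist-reduced, checkerboard colorable, all complementary regions disks) are exactly the ones required there, and $M = F\times[-1,1]$ has incompressible boundary $F\times\{\pm 1\}$ with $F\times[-1,1]\setminus N(F)$ a pair of trivial $I$-bundles (hence atoroidal and $\bndry$-anannular), the theorem gives hyperbolicity together with
\[
\vol(M\setminus L)\ \geq\ v_8\cdot\max\big\{|a_{m-1}|-|a_m|,\ |b_{n+1}|-|b_n|\big\}-\tfrac12\chi(\bndry M).
\]
Since $\chi(\bndry M)=2\chi(F)$, and since the maximum of the two nonnegative quantities is at least their average $\tfrac12\beta_L$, this yields $\vol(M\setminus L)\geq \tfrac{v_8}{2}\beta_L-\chi(F)$. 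When $F=T^2$ we have $\chi(F)=0$ and the clean bound $\tfrac{v_8}{2}\beta_L$ drops out; when $F$ has genus $\geq 2$, $\chi(F)<0$, so $-\chi(F)=|\chi(F)|>0$ and one needs to absorb it: replacing $\beta_L$ by $\beta_L-2\chi(F)$ in the target inequality gives room, since $\tfrac{v_8}{2}(\beta_L-2\chi(F))=\tfrac{v_8}{2}\beta_L-v_8\chi(F)\leq \tfrac{v_8}{2}\beta_L-\chi(F)$ (using $v_8>1$ and $\chi(F)<0$), so the displayed lower bound follows a fortiori from what Theorem~\ref{thm:volume} already provides.

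For the upper bound I would use an upper volume estimate in terms of the twist number $t_F(\pi(L))$, in the spirit of the decomposition of an alternating link complement into ideal polyhedra. The idea is that a reduced alternating diagram on $F$ with $t := t_F(\pi(L))$ twist regions has a complement admitting an ideal polyhedral (or angled-polyhedral) decomposition with a number of ideal tetrahedra/octahedra controlled linearly by $t$; by Mostow rigidity and the fact that the volume of any ideal polyhedron built from $N$ ideal simplices is at most $N v_4$ (resp.\ controlled by $v_8$ after bandling bigon regions), one gets $\vol(M\setminus L)<C\cdot t$ for an explicit constant in each genus range. I would then feed in part (3) of Theorem~\ref{thm:GutsAndJones}, namely $t_F(\pi(L))=|a_{m-1}|+|b_{n+1}|-|a_m|-|b_n|+\chi(F)=\beta_L+\chi(F)$, to rewrite the bound in terms of $\beta_L$: for $F=T^2$ this gives $\vol<10\,v_4\cdot\beta_L$ (here $\chi(F)=0$ so $t=\beta_L$), and for genus $\geq 2$ it gives $\vol<6\,v_8\cdot(\beta_L+\chi(F))$. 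The precise constants $10 v_4$ and $6 v_8$ are exactly what one obtains by counting simplices in the chosen decomposition, so the argument is really: (i) hyperbolicity; (ii) a diagrammatic upper bound $\vol\leq (\text{const})\cdot t$; (iii) substitute $t=\beta_L+\chi(F)$.

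The main obstacle I anticipate is establishing the upper bound step (ii) with the stated explicit constants: unlike the $S^3$ case, where Lackenby's decomposition of alternating link complements into two ideal polyhedra is classical, here one needs the analogous decomposition for alternating links in thickened surfaces, keeping careful track of how the genus of $F$ (which forces extra ideal cells, or equivalently the $\chi(F)$ correction) enters the simplex count. I would look to adapt the polyhedral decomposition used in the thickened-surface literature cited in the introduction (e.g.\ \cite{ThickTaitConj, AdamsFXI}) together with the guts/JSJ analysis already developed for Theorem~\ref{thm:GutsAndJones}, and then optimize the bookkeeping to reach $10 v_4$ for the torus and $6 v_8$ in higher genus; everything else (the lower bound, and the passage from $t_F(\pi(L))$ to $\beta_L$) is essentially a citation of results already on the table.
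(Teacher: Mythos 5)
Your overall strategy --- reduce to a two-sided bound in terms of the twist number and then substitute $t_F(\pi(L))=\beta_L+\chi(F)$ via Lemma~\ref{lem:twistNumber} --- is the right one and matches the paper for the upper bound. But there are two genuine gaps. First, your lower bound for genus $\geq 2$ contains a reversed inequality: you derive $\vol(M\setminus L)\geq \tfrac{v_8}{2}\beta_L-\chi(F)$ from Theorem~\ref{thm:volume} and claim this implies $\vol\geq\tfrac{v_8}{2}(\beta_L-2\chi(F))=\tfrac{v_8}{2}\beta_L-v_8\chi(F)$. Since $\chi(F)<0$ and $v_8>1$, we have $-v_8\chi(F)>-\chi(F)$, so the target bound is \emph{stronger} than what you derived, not weaker; the ``a fortiori'' step fails. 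Theorem~\ref{thm:volume} genuinely cannot give the stated constant, because passing to the maximum and then to the average loses a factor of $v_8$ on the $\chi(F)$ correction. The fix is to apply the Agol--Storm--Thurston bound to $S_A$ and $S_B$ \emph{separately} and average: $\vol\geq v_8(|a_{m-1}|-1-\chi(F))$ and $\vol\geq v_8(|b_{n+1}|-1-\chi(F))$, whose mean is exactly $\tfrac{v_8}{2}(\beta_L-2\chi(F))$. (The paper instead cites the two-sided twist-number bound of Kalfagianni--Purcell \cite[Theorem 1.4]{ALSVB}, which packages this.) Relatedly, your justification for invoking Theorem~\ref{thm:volume} is shaky: a trivial $I$-bundle over a surface of genus $\geq 1$ contains vertical essential annuli, so the $\bndry$-anannularity hypothesis needs care; the paper deliberately does not route Theorem~\ref{Thm:FxIUpperBound} through Theorem~\ref{thm:volume}, instead getting hyperbolicity directly from \cite[Theorem 1.1]{WGA}.

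Second, your upper bound is a plan rather than a proof: you correctly identify that one needs $\vol<C\cdot t_F(\pi(L))$ with the explicit constants $10v_4$ (torus) and $6v_8$ (higher genus), but you propose to build the polyhedral decomposition and do the simplex count yourself, and you flag this as the main obstacle without resolving it. This step is precisely the content of \cite[Theorem 1.4]{ALSVB} (Kalfagianni--Purcell), which the paper cites; without that citation or a complete construction, the upper bound is not established. Your step (iii), the substitution $t_F(\pi(L))=\beta_L+\chi(F)$ using $|a_m|=|b_n|=1$, is correct and is exactly how the paper concludes.
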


A key idea in  the proof of Theorem  \ref{thm:GutsAndJones},   is to relate the coefficients of $J_0(\pi(L))$ to the topology of the checkerboard graphs of
 any  projection $\pi(L)\subset F$. This idea is reminiscent of techniques that were used to study the Jones polynomial of adequate and alternating links in $S^3$\cite{Lickbook, LT, 5AMP, Guts}.
 Under  a graph theoretic condition, that we call geometric adequacy,  we show that the first and last two coefficients of  $J_0(\pi(L)$,  can be calculated from the checkerboard graphs.
 The checkerboard graphs of reduced, alternating projections turn out to satisfy these graph theoretical conditions. On the other hand, these graphs form spines of the checkerboard surfaces.
 We use the work of Howie and Purcell \cite{WGA} in  a crucial way to we show that the graph-theoretic combinatorics that determine the coefficients $a_{m-1}$ and $b_{n+1}$ are exactly the ones dictating the calculation
 of the Euler characteristics of the guts of the checkerboard surfaces.
 
 There exist  open conjectures  predicting that  the volume of hyperbolic 3-manifolds is determined by certain asymptotics of quantum invariants  ~\cite{ kashaev:vol-conj, book, Chen-Yang, MR3874003}. For links in $S^3$ these invariants include the Jones polynomial and its generalizations. The relations of skein theoretic invariants and volume via guts of surfaces
  established in  \cite{DFVJP, Guts} and in Theorems \ref{Thm:FxIUpperBound} and \ref{thm:GutsAndJones}, are robust and  seem independent of these  conjectures.
 
 The paper is organized as follows: in Section 2, we consider projections of links on a surface and we define the polynomial $J_0(\pi(L)).$ Then, we explain how known results on the structure of  Kauffman skein modules imply that   $J_0(\pi(L))$
 is an invariant of isotopy for links in $F\times [-1, \ 1]$.  Finally, we restrict ourselves to projections $\pi(L)\subset F$ that have disk regions. For such projections we define   the notion
 to geometric adequacy,  under which  we obtain formulae for the coefficients $|a_{m-1}|,  |b_{n+1}|, |a_m|, |b_n|$ (Theorem \ref{prop:JonesPoly}).
 We also compare the notion of geometric adequacy with other  notions of adequacy that have recently appeared in the literature \cite{ThickTaitConj}.
 
 In Section 3 we consider alternating projections  $\pi(L)\subset F\subset M$ that are checkerboard colorable and have disk regions. We define a notion of diagram reducibility that generalizes the corresponding notion for link diagrams
 on a 2-sphere in $S^3$, and interplays nicely with the complexity of ``edge representativity" considered in \cite{WGA} and with geometric adequacy (Proposition \ref{prop:ReducedEdgeRep} and Lemma \ref{conditions}).
 This interplay allows us to relate our work in Section 2 to work of Howie and Purcell on weakly generalized alternating links. The main result in this section is Theorem \ref{thm:Gutsgeneral} 
 that is a generalized version of Theorem \ref{thm:GutsAndJones}. The more general version replaces the hypothesis that $F$ is incompressible in $M$ with a hypothesis of ``high representativity". We also prove Corollary 1.3. In Section 4 we prove Theorem \ref{thm:volume}  and we derive Theorem \ref{Thm:FxIUpperBound}.  

\smallskip
{\bf Acknowledgement.} The authors thank Renaud Detcherry  for helpful conversations about skein modules of 3-manifolds.

\medskip

\section{Skein polynomials and geometric adequacy}

\subsection{Bracket polynomials for link diagrams on surfaces}  Let $M$ be a 3-manifold with or without boundary and let $F$ be an orientable surface 
in $M$. Given a link $L\subset F\times [-1, \ 1]\subset  M$  we can consider its image under the projection $\pi:  F\times [-1, 1] \longrightarrow  F=F\times\{0\}$.
Throughout the paper we will refer to
 $\pi(L)\subset F$ as a link projection or a link diagram.  
 
\begin{figure}[H]
	
	\centering
		\def\svgwidth{.7\columnwidth}
		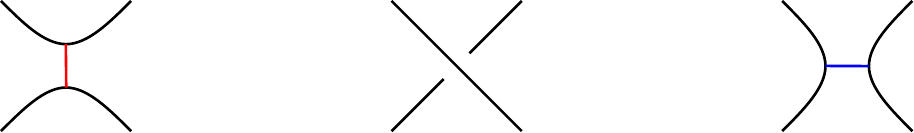

		\caption{ The  $A$ resolution(left) and the $B$ resolution (right) of a crossing.}
		\label{fig:resolutions}
		
\end{figure}

 Given a crossing of a link diagram  $D=\pi(L)$, we define the $A$ and $B$ resolutions of the crossing  as indicated in Figure ~\ref{fig:resolutions}. 

Let  ${\mathcal D}(F)$ denote the set
of all (unoriented) link diagrams on $F$,  taken up isotopy on $F.$ Also  let  $X_F$ denote  the set of all collections of disjoint simple closed curves (a.k.a multi-curves) on $F$. We define a Kauffman  bracket function 
 
 $${\langle \ \  \rangle}: {\mathcal D}(F)\longrightarrow  \bZ[A, A^{-1}, (A^2-A^{-2})^{-1}]X_F,$$
 by the following skein relations.
\begin{enumerate}
	\item $\langle \KPposcrossing \rangle = A \langle \KPvertical \rangle + A^{-1} \langle \KPhorizontal \rangle$
	\item $\langle L \sqcup \KPcircle \rangle = (-A^2 - A^{-2}) \langle L \rangle$
	\item $\langle  \KPcircle \rangle=1$
\end{enumerate}
As we are working on a surface that may have essential curves, we also require that the unknots in the above relations bound disks on $F$.

 To describe ${\langle D \rangle}$ as a function in $ \bZ[A,A^{-1}, (A^2-A^{-2})^{-1}]X_F$ in more detail we need some preparation.
 
\begin{defn}\label{defn:states}A  \textit{Kauffman state} for a link diagram diagram $D\subset F$ is an assignment of the  $A$ or $B$ resolution for each crossing of $D$. The result of applying any state to $D$ is a collection of disjoint simple closed curves  on $F$ called  \textit{state circles}.

Given a state $s$ we will use $|s|=|s(D)|$  to denote the number of state circles resulting from $D$ by applying $s$, and  we let  $a(s)$ be the number of $A$ resolutions in $s$, and $b(s)$ the number of $B$ resolutions.
Also, we will use $s_t$ and $s_{nt}$ to denote the set of contractible and non-contractible state circles resulting from $s$, and we will use $|s_t|$ and $|s_{nt}|$ to denote the cardinalities of these sets.

Finally, we will use  $s_A$ to denote the state where all the resolutions are $A$, and $s_B$ to denote the state where all the resolutions are $B$.
\end{defn}

\vskip 0.02in

Given a link projection $D=\pi(L)$ in ${\mathcal D}(F)$ we define

\begin{equation}
\label{eqn:B0}
{\langle D \rangle}_0 = \sum_{\{s \;  | \; s_{nt}=\emptyset\}} A^{a(s)-b(s)} (-A^2 - A^{-2})^{|s_t|-1},
\end{equation}
that is, we sum over all states that when applied to $D$ produce only contractible state circles.
Similarly given a collection  $X$ of simple closed disjoint curves on $F$, none of which is contractible, we define

\begin{equation}
\label{eqn:BX}
{\langle D \rangle}_X= \sum_{\mathrm{\{s \;  | \; s_{nt}=X\}}} A^{a(s)-b(s)} (-A^2 - A^{-2})^{|s_t|-1} ,
\end{equation}
where $|s_t|$ is the number of contractible  curves in $s=s(D).$ Clearly, for a given $D$, the value of ${\langle D \rangle}_X$ will be non-zero for only finitely many collections $X$.

Using the defining skein relations,  for any $D\in {\mathcal D}(F)$ we can write $\bZ[A,A^{-1}]$

\begin{equation}
\langle D \rangle =\langle D \rangle_0+   \sum_{X\in X_F}\   {\langle D \rangle}_X  X.
\label{bracketD}
\end{equation}

Note the by definition, ${\langle D \rangle}_0$ is always a polynomial in $\bZ[A,A^{-1}]$ and
${\langle D \rangle}_X$ are not   in $\bZ[A,A^{-1}]$ if there exist states for which $s_t=0$,
in which case we get the factor $(-A^2 - A^{-2})^{-1}$. Thus, in particular,  $(-A^2 - A^{-2}){\langle D \rangle}_X$ and always lies in  $\bZ[A,A^{-1}]$.

The argument used for $S^2$ in the 3-sphere works to show that
${\langle D \rangle}$
is invariant under Reidemeister moves II and III  on $F$ and that it changes by a power of $A$ under  Reidemeister move I. 
Then, ${\langle D \rangle}={\langle D \rangle}_0$ is an invariant of framed links and a normalization of it gives the Jones polynomial  (see e.g.  ~\cite{Lickbook}).
We don't expect that ${\langle \pi(L) \rangle}$ an isotopy invariant of the framed link $L$ in a general 3-manifold $M$.

A way to generalize the Jones polynomial for links in arbitrary 3-manifolds is  to consider skein modules: given a 3-manifold $M$, let ${\mathcal L}(M)$
denote the set of isotopy classes of framed links in $M$.
The \emph{Kauffman skein module} of a 3-manifold $M$, denoted by  $\Skein(M)$, is the quotient of the free $\bZ[A,A^{-1}]$ module generated by  
${\mathcal L}(M)$,
by the submodule generated by all relations of the form
\begin{itemize}
	\item $\KPposcrossing -A  \KPvertical  -A^{-1}  \KPhorizontal $
	\item $ L \sqcup \KPcircle - (-A^2 - A^{-2})  L $
	\item $\KPcircle - 1$
\end{itemize}
Here the crossing modifications take place in a small 3-ball in $M$ that intersects the link to be modified at a single crossing and the notation $ \KPcircle$ is used to define the
isotopy class of the knot that bounds a smooth disk in $M$.
Given a link $L\subset M,$ let ${\bar L}$ denote the class of $L$ in $M$. Now the image of ${\bar L}$ 
under the map 
$${\mathcal L}(M) \longrightarrow \Skein(M), $$
is an isotopy invariant of the framed link $L$.

Let us now discuss the special case where $M=F\times [-1, \ 1]$, a thickened surface.
It is known that  $\Skein(F\times [-1, 1])$ is free over $\bZ[A,A^{-1}]$ with basis $X_F\cup \{\emptyset\}$, where $\emptyset$ is the empty knot ~\cite{Pr99}.
Now any framed link in $F\times [-1, 1]$ can be projected on $F=F\times \{0\}$, and any two link diagrams of $F$ represent the same element in
${\mathcal L}(M)$ if and only if they are related by Reidemeister moves II and III on $F$.
Thus  $D=\pi(L)$, the expression in equation (\ref{eqn:BX}) can immediately be viewed as the image of $L$ in $\Skein(F\times [-1, 1])$. 
It follows that  ${\langle D \rangle}_0 , {\langle D \rangle}_X$  are invariants of $L$ in $F\times [-1, 1]$. 
These invariants were recently considered by Boden, Karimi and Sikora  ~\cite{ThickTaitConj}
and were used to prove versions of two of the Tait conjectures for alternating links $F\times [-1, 1]$.

We will be concerned with $\langle D \rangle_0$, the sum of elements that appear when, after using the skein relations, we are left with just the empty knot. 
As pointed out in ~\cite{ThickTaitConj}, one  can make $\langle D \rangle_0$ an isotopy invariant of oriented non-framed links by 
considering
$(-1)^{w(D)}A^{-3w(D)} \langle D \rangle_0,$ where $w(D)$ is the writhe number of $D$.
Setting
$$J_0(t)=\left((-1)^{w(D)}A^{-3w(D)} \langle D \rangle_0\right)|_{t=A^{4}},$$
we obtain a isotopy invariant of $L$. Note, our convention differs from ~\cite{ThickTaitConj}, as they set
$t=A^{-4}$.  Let

$$J_0= a_mt^m+ a_{m-1}t^{m-1} + \ldots + b_{n+1}t^{n+1} + b_n t^{n},$$
where $m$ and $n$ denote the highest and lowest degrees of $J_0$, respectively. 

\begin{prop}\label{isinvariant} For any link $L\subset M=F\times [-1, \ 1]$, the polynomial $J_0=J_0(L)$ is an isotopy invariant of $L$.
\end{prop}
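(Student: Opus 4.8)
The plan is to reduce the statement to the known structure theorem for the Kauffman skein module of a thickened surface. First I would recall the fact (due to Przytycki, cited in the excerpt) that $\Skein(F\times[-1,1])$ is a free $\bZ[A,A^{-1}]$-module with basis $X_F\cup\{\emptyset\}$. Given a framed link $L\subset F\times[-1,1]$, choose any diagram $D=\pi(L)\subset F$. Expanding $\bar L$ using the defining skein relations exactly as in equation~(\ref{bracketD}) writes the class of $L$ in $\Skein(F\times[-1,1])$ as $\langle D\rangle_0\cdot\emptyset + \sum_{X\in X_F}\langle D\rangle_X\, X$. Since the images of the basis elements $X$ and $\emptyset$ are linearly independent over $\bZ[A,A^{-1}]$ by freeness, the coefficient of $\emptyset$, namely $\langle D\rangle_0$, depends only on the class of $\bar L$ in the skein module — that is, only on the isotopy class of the \emph{framed} link $L$, not on the chosen diagram. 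Here one uses that two diagrams on $F$ represent isotopic framed links in $F\times[-1,1]$ exactly when they differ by Reidemeister II and III moves on $F$, and that these moves do not change the image in $\Skein$.

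Next I would upgrade this from a framed-link invariant to an oriented (non-framed) link invariant. The standard computation for the Kauffman bracket in $S^3$ carries over verbatim to diagrams on $F$: Reidemeister II and III leave $\langle D\rangle$ (hence $\langle D\rangle_0$) unchanged, while a Reidemeister I move multiplies $\langle D\rangle_0$ by $-A^{\pm 3}$, and it changes the writhe $w(D)$ by $\pm 1$ accordingly. Therefore the normalized quantity $(-1)^{w(D)}A^{-3w(D)}\langle D\rangle_0$ is invariant under all three Reidemeister moves on $F$, hence is an invariant of the oriented link $L$ in $F\times[-1,1]$. Substituting $t=A^4$ gives $J_0(L)\in\bZ[t^{1/4},t^{-1/4}]$, and the degrees $m,n$ and coefficients $a_m,\dots,b_n$ are then determined by $L$. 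This completes the argument.

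The only genuine subtlety — and the step I expect to require the most care — is the passage from "image of $\bar L$ in $\Skein$ is an invariant" to "$\langle D\rangle_0$ is an invariant." One must be careful that the expansion in~(\ref{bracketD}) really is the image of $\bar L$ under the quotient map $\mathbbZ[A,A^{-1}]\,{\mathcal L}(M)\to\Skein(M)$: this holds because each skein-relation move used in the expansion is precisely one of the defining relations of $\Skein(M)$, and because the unknots created in the expansion bound disks in $F\times[-1,1]$ so that relations (2) and (3) apply. Once this identification is in place, freeness of the basis $X_F\cup\{\emptyset\}$ does the rest: reading off the $\emptyset$-coefficient is a well-defined operation on $\Skein$, independent of presentation. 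Everything else is the routine Kauffman-bracket writhe bookkeeping, which I would not spell out in detail.
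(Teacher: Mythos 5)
Your proposal is correct and follows essentially the same route as the paper: the discussion preceding the proposition identifies the expansion in equation~(\ref{bracketD}) with the image of $L$ in the free module $\Skein(F\times[-1,1])$ with basis $X_F\cup\{\emptyset\}$, reads off $\langle D\rangle_0$ as the well-defined coefficient of $\emptyset$, and then applies the standard writhe normalization to pass from framed to oriented non-framed links. Your explicit remark about why the expansion really computes the image of $\bar L$ in the skein module is a fair articulation of a step the paper leaves implicit.
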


\begin{rem}{\rm  We are interested in the absolute values of the coefficients of $J_0$ which are the same as these of
${\langle D \rangle}_0 $, since as mentioned earlier remain unchanged under Reidemeister move I on $F$. By slightly abusing our setting, when talking about these coefficients, we will feel free to use $J_0(D)$ or  ${\langle D \rangle}_0$ interchangeably. }
\end{rem}

In general let us  start with a 3-manifold $M$ and a connected, closed, orientable surface $F$ embedded in $M$ and a projection
of $\pi(L) \subset F$ of a link $L\subset F\times [-1, 1]\subset M$. We can define ${\langle \pi(L) \rangle}_0 $ and  $J_0(D)$ as above, but general it is hard to decide when they descend to isotopy link invariants in $M$: firstly, understanding the structure of $\Skein(M)$ is known to be a very hard problem. There is no algorithm for computing
$\Skein(M)$  in general, and these modules have only been explicitly computed  for some simple families of 3-manifolds. See \cite{RW} and references therein.
For our purposes here, we  will consider $F\times [-1, 1]$ embedded in a 3-manifold $M$ that is closed or it has incompressible boundary.  The inclusion induces a map
\begin{equation}
 \Skein(F\times [-1, 1])\longrightarrow \Skein(M),
 \label{eqn:inclusion}
 \end{equation}
and the image of $L$ in $\Skein(F\times [-1, 1])$ is easy to calculate as we said above.
However, in general very little is known about the structure of the map of Equation (\ref{eqn:inclusion}).
 For instance, for closed $M$, if one works over the field $\bQ(A)$, then the skein module of $M$ is finitely generated while the skein module of $F\times [-1, 1]$ is infinitely generated \cite{GJS}.
Thus, in this case, the map in equation (\ref{eqn:inclusion}) has a substantial kernel and it is expected that this is the case over
$\bZ[A,A^{-1}]$ as well.
Given a  link $L\subset F\times [-1, \ 1]\subset M$, with $D=\pi(L)$ on $F$,
in general we don't expect that
 $J_0(\pi(L))$ is  an invariant of isotopy of $L$ in $M$. 
In the remaining of the paper, unless working with $M=F\times [-1, \ 1]$, we will consider $J_0$  as a function on the set of link projections $\pi(L)$ on $F.$
In this setting,
and it is rather striking that, as
Theorems ~\ref{thm:volume} and ~\ref{thm:volumegeneral} show, 
$J_0(\pi(L))$  captures  intrinsic geometric information of the complement $M\setminus L$.
\smallskip

\subsection{Geometrically adequate links}
Recall that $M$ is a 3-manifold and $F\subset M$ is an embedded orientable surface and let $D=\pi(L)\subset F$ be a link projection.
Given a Kauffman state $s$ on $D$ we define the \textit{ state graph} $G_s=G_s(D)$ as follows: The vertices of $G_s$ correspond to state circles of $s$,
and the edges correspond to crossings
of $\pi(L)$. Each edge connects the sub-arcs of the state circles that remaining from the splitting of that crossing in $s$.
We will use $G_A$ to denote the graph corresponding to $s_A$, and $G_B$ to denote the graph corresponding to $s_B$.
From now on we will  restrict ourselves to projections $D$ where  all the state circles in $s_A$ and in $s_B$ are contractible  on $F$.
See Figure \ref{alldisks} for an example of a link diagram $D$ with this property, where we also show the graphs $G_A$, $G_B$.

\begin{defn} 
We say that the diagram $D=\pi(L)\subset F$ is \textit{geometrically} $A$\textit{-adequate} if $G_A(D)$ has no 1-edge loops and all circles of $s_A$ are contractible. Likewise, we say $D$ is \textit{geometrically} $B$\textit{-adequate} if $G_B(D)$ has no 1-edge loops and all circles of $s_B$ are contractible.

If $D$ is both geometrically $A$-adequate and $B$-adequate, we say it is \textit{geometrically adequate}.
\end{defn}

\begin{defn}With the notation as above, suppose that the diagram $D=\pi(L)\subset F$ is geometrically adequate.
Define the \textit{reduced graph of} $G_A$, $G'_A$, to be the graph where, if two edges $e_1$ and $e_2$ are adjacent to the same pair of vertices, we remove one of them if $e_1\cup e_2$ bounds a disk on $F$. Let $e'_A$  the number of edges of $G'_A$.

Similarly define the reduced graph $G'_B$ and denote the number of edges by  $e'_B$.
\end{defn}

In $S^3$ a link diagram $D=D(L)$ on a projection 2-sphere is called is called $A$-adequate, if $G_A(D)$ has no 1-edge loops and $B$-adequate
$G_B(D)$ has no 1-edge loops.  For such diagrams, Futer, Kalfagianni and Purcell  ~\cite{Guts} have established relations between coefficients of the Kauffman bracket of $D$ and 
geometric properties and invariants of the link complement $S^3\setminus L$.  In particular, they show that when $D$ represents a hyperbolic link, coefficients of the Kauffman bracket of $D$
provide linear bounds for the volume of the complement of the link. In this paper, we will generalize these geometric relations for links that admit alternating projections on surfaces in 3-manifolds.
A key step for this generalization is Theorem ~\ref{prop:JonesPoly} below that also holds for projections on a sphere in $S^3$.

\begin{thm} 
\label{prop:JonesPoly}
Suppose that  $D=\pi(L)\subset F\subset M$  and let $a_m, a_{m-1}, b_{n+1}, b_n$ denote the two first and two last coefficients of $J_0(D)$.
\begin{enumerate}

\item If $D$ is geometrically $A$-adequate then, we have $|a_m|  = 1$ and $|a_{m-1}| = e'_A - |s_A| + 1$.

\item If $D$ is geometrically $B$-adequate then, we have $ |b_n| = 1$ and $|b_{n+1}| = e'_B - |s_B| + 1$.
\end{enumerate}
\end{thm}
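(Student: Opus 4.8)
The plan is to adapt the adequacy arguments for link diagrams on $S^2$ (as in Lickorish's book and \cite{Guts}) to the surface setting, taking care that only the states with $s_{nt}=\emptyset$ contribute to $J_0(D)=\langle D\rangle_0$. By symmetry (reversing the roles of $A$ and $B$, i.e. looking at the mirror) it suffices to prove statement (1), so assume $D$ is geometrically $A$-adequate. First I would record the contribution of the all-$A$ state $s_A$ to $\langle D\rangle_0$: since all circles of $s_A$ are contractible, $s_A$ is one of the states summed over in \eqref{eqn:B0}, and it contributes the monomial $A^{c}(-A^2-A^{-2})^{|s_A|-1}$, where $c$ is the number of crossings of $D$; the top-degree term of this contribution is $\pm A^{c+2(|s_A|-1)}$. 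The key claim is that this is the unique highest-degree term of $\langle D\rangle_0$ up to sign, so that (after multiplying by the writhe-normalization monomial and setting $t=A^4$) $|a_m|=1$, and that the second-highest term comes precisely from the states obtained from $s_A$ by switching a single crossing to a $B$-resolution.

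The main technical tool is the standard ``one switch changes $|s|$ by exactly one'' lemma together with a degree bookkeeping argument: for any state $s$ obtained from $s_A$ by $b(s)$ switches, one has $a(s)-b(s)=c-2b(s)$ and the maximal $A$-degree of its contribution is $c-2b(s)+2(|s|-1)$; moving along a path of single switches from $s_A$ to $s$, each switch changes $|s|$ by $\pm 1$, so $|s|\le |s_A|+b(s)$, hence every state's maximal degree is $\le c+2(|s_A|-1)$, with equality only if every switch along the way increased the circle count. Geometric $A$-adequacy (no $1$-edge loops in $G_A$) guarantees that the first switch off $s_A$ always \emph{decreases} $|s|$ — switching a crossing whose two $A$-arcs lie on the same state circle would correspond to a $1$-edge loop — so the top degree $c+2(|s_A|-1)$ is achieved only by $s_A$ itself, giving $|a_m|=1$. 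For the second coefficient, I would isolate the states that can contribute to degree $c+2(|s_A|-1)-4$: these are $s_A$ itself (its $(-A^2-A^{-2})^{|s_A|-1}$ expansion contributes a term of that degree with coefficient $\pm(|s_A|-1)$) and the single-switch states $s_e$, each of which has $|s_e|=|s_A|-1$ (all still contractible, since a single switch cannot create a noncontractible curve out of contractible ones — if it did, the state would be excluded from $\langle D\rangle_0$, but more importantly this is where one must check that such states remain in the sum) and hence contributes $\pm A^{c-2+2(|s_A|-2)}=\pm A^{c+2(|s_A|-1)-4}$ with coefficient $\pm1$. Summing, the coefficient of $A^{c+2(|s_A|-1)-4}$ has absolute value $\big|\,\pm(|s_A|-1)\mp(\#\{\text{single switches}\})\,\big|$; counting single switches up to the reduction that identifies two crossings whose $A$-smoothed arcs cobound a disk — which is exactly the passage from $G_A$ to $G'_A$ — gives $\#=e'_A$ minus corrections, and careful sign-tracking (all the relevant monomials in a connected chunk of the state sum have the same sign, by the usual adequacy sign argument) yields $|a_{m-1}|=e'_A-|s_A|+1$.

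The step I expect to be the main obstacle is controlling the interaction between the \emph{contractibility} restriction defining $\langle D\rangle_0$ and the degree estimates: in the $S^3$ case one sums over \emph{all} states, whereas here states with $s_{nt}\ne\emptyset$ are dropped, and a priori dropping states could either kill the would-be top term or create unexpected cancellation among the surviving ones. I would handle this by showing that (a) the top-degree and second-to-top-degree contributions identified above come from states with $s_{nt}=\emptyset$ (immediate for $s_A$ by hypothesis, and for single switches since one switch from an all-contractible state cannot produce a noncontractible circle — a noncontractible simple closed curve on $F$ cannot bound a disk on either side of the band move), so these contributions genuinely appear in $\langle D\rangle_0$; and (b) no \emph{other} state, contractible or not, reaches those degrees, which is exactly the band-move inequality $|s|\le|s_A|+b(s)$ applied with the adequacy strict inequality at the first switch. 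Once (a) and (b) are in place, the computation of $|a_m|$ and $|a_{m-1}|$ reduces to the same linear algebra of signs as in \cite[Lemma 5.3 et seq.]{Guts}, with $e'_A$ playing the role that the reduced all-$A$ graph plays there. Finally I would note that the writhe normalization and the substitution $t=A^4$ only shift degrees and multiply by $\pm1$, so they do not affect the absolute values of the extreme coefficients, completing the proof of (1); statement (2) follows verbatim after exchanging $A\leftrightarrow B$.
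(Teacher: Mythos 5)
Your argument for $|a_m|=1$ is essentially the paper's: the all-$A$ state contributes top degree $c+2|s_A|-2$, the inequality $|s|\le|s_A|+b(s)$ bounds every other state's maximal degree, and geometric $A$-adequacy forces the first switch off $s_A$ to merge two contractible circles and strictly decrease the circle count, so no other state reaches the top degree. Your checks (a) and (b) on the interaction with the contractibility restriction defining $\langle D\rangle_0$ are the right ones, and reducing part (2) to part (1) by mirroring is exactly what the paper does.

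The gap is in the computation of $|a_{m-1}|$. You assert that the only states contributing to degree $c+2|s_A|-6$ are $s_A$ and the single-switch states. That is false: any state $s$ with $|s|=|s_A|+b(s)-2$ and all circles contractible contributes, and these include states with $b(s)\ge 2$ switches, provided all switches after the first occur at edges of $G_A$ parallel to the first (joining the same pair of vertices and cobounding disks on $F$): the first switch merges, dropping $|s|$ by one, and each subsequent switch at a parallel edge splits a contractible circle into two contractible ones, raising $|s|$ by one. If you omit these states, your count of contributors is the number of crossings $c=e_A$, not $e'_A$, and you would get $|a_{m-1}|=e_A-|s_A|+1$, which is wrong whenever $G_A$ has parallel edges. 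The passage from $e_A$ to $e'_A$ is not achieved by ``identifying'' parallel single-switch states --- those are distinct Kauffman states, each genuinely contributing $\pm1$ --- but by including all $2^k-1$ nontrivial states supported on a family of $k$ parallel edges and observing that their signed contributions collapse via $\sum_{j=1}^{k}\binom{k}{j}(-1)^j=-1$, so each family counts once, i.e.\ as one edge of $G'_A$. You also need the companion fact (which the paper proves) that a second switch at an edge joining the same two vertices but \emph{not} cobounding a disk with the first produces noncontractible circles, so such states are excluded from $\langle D\rangle_0$, and making those circles contractible again would force a merge and drop the degree. Without these two points the formula $|a_{m-1}|=e'_A-|s_A|+1$ does not follow. (A minor additional slip: on a surface a single switch can leave $|s|$ unchanged rather than change it by $\pm1$, though this does not affect the one-sided inequality you use.)
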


Theorem  ~\ref{prop:JonesPoly} and its proof should be compared with ~\cite[Proposition 2.1]{VTJPAK} and the usual calculation of the degrees of the Jones polynomial for ordinary alternating links in ~\cite{Lickbook}. 
We will split the proof into two lemmas. The first one concerns  the determination of $|a_m|$ and $|b_n|$.

\begin{lem}
\label{lem:firstCoef} 
Suppose that  $D=\pi(L)\subset F\subset M$  is a link diagram and   let $a_m,  b_n$ denote the first and last coefficients of $J_0(D)$.
\begin{enumerate}
\item If  $D$ is geometrically $A$-adequate then, we have $|a_m|  = 1$.
\item If  $D$ is geometrically $B$-adequate then,  we have $ |b_n| = 1$.
\end{enumerate}
\end{lem}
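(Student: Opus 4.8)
The plan is to compute the extreme degree of $\langle D\rangle_0$ directly from the state sum \eqref{eqn:B0} and show that, under geometric $A$-adequacy, the maximal-degree term comes \emph{only} from the all-$A$ state $s_A$ and appears with coefficient $\pm 1$. By symmetry (replacing $A$ by $A^{-1}$, which interchanges the roles of the $A$- and $B$-resolutions), the statement about $b_n$ will follow from the statement about $a_m$, so I would concentrate on part (1). Since $J_0$ differs from $\langle D\rangle_0$ only by a unit $\pm A^{-3w(D)}$ and the substitution $t=A^4$, the absolute value of the top coefficient of $J_0$ equals the absolute value of the top coefficient of $\langle D\rangle_0$ in $A$; so it suffices to work with $\langle D\rangle_0$.

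The key steps, in order: First, record the standard degree bookkeeping: for a state $s$ contributing to \eqref{eqn:B0} (i.e.\ with $s_{nt}=\emptyset$), the maximal $A$-degree of the monomial $A^{a(s)-b(s)}(-A^2-A^{-2})^{|s_t|-1}$ is $a(s)-b(s)+2(|s_t|-1) = a(s)-b(s)+2|s|-2$, where $|s|=|s_t|$ because all state circles are contractible for every such $s$ once $D$ is geometrically $A$-adequate — this is exactly where I use that $s_A$ has only contractible circles, since it guarantees $s_A$ itself contributes to the sum. Second, prove the comparison estimate: if $s'$ is obtained from $s$ by switching one $A$-resolution to a $B$-resolution, then $|s'|\in\{|s|-1,|s|+1\}$, hence the maximal degree of the $s'$-term is at most the maximal degree of the $s$-term, with equality only when the switch merges two circles (i.e.\ $|s'|=|s|-1$). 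Iterating from $s_A$ down to an arbitrary $s$ shows every term has maximal degree $\le a(s_A)-b(s_A)+2|s_A|-2 =: M$, with $M$ achieved by $s_A$. Third — the crucial point — show that \emph{no other} state attains degree $M$ and that cancellation does not kill the $s_A$ contribution: a state $s$ attaining $M$ must be reachable from $s_A$ by a sequence of single switches each of which merges two state circles; I claim geometric $A$-adequacy (no $1$-edge loops in $G_A$) forces the first such switch to \emph{split} a circle rather than merge, because a $1$-edge loop in $G_A$ is precisely the obstruction to a crossing whose $B$-smoothing merges the two arcs of a single $s_A$-circle. Hence $s_A$ is the unique state of maximal degree, its top monomial is $A^{a(s_A)-b(s_A)}\cdot(-A^2)^{|s_A|-1}$ contributing $\pm 1$ to the top coefficient, and therefore $|a_m|=1$.

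The main obstacle I anticipate is the third step: carefully justifying that geometric $A$-adequacy rules out a degree-$M$ state other than $s_A$. In the classical $S^3$ argument (as in \cite{Lickbook, Guts}) this is the statement that switching any single crossing of $s_A$ splits a circle, which is equivalent to the absence of $1$-edge loops in $G_A$; here the same local picture applies because the switch happens in a ball and the circles involved are contractible, but I must make sure the contractibility hypothesis is genuinely used so that the merge/split dichotomy $|s'|=|s|\pm 1$ holds on $F$ (on a surface a single smoothing change could a priori also leave $|s|$ unchanged while changing homotopy classes — but since $s_A$ has only contractible circles and we are tracking the maximal-degree terms, the relevant circles stay contractible along the chain, so the Euler-characteristic argument that forces $|s'|=|s|\pm1$ goes through). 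Once this local lemma is in place, the rest is the routine degree count above.
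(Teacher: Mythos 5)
Your overall strategy is exactly the paper's: isolate the all-$A$ state, show its top-degree term is $\pm A^{c+2|s_A|-2}$, and show that no other state reaches that degree because the very first resolution change out of $s_A$ already forces the maximal degree to drop. However, the key step of your plan has the merge/split bookkeeping inverted, in two places whose errors happen to cancel. Switching one crossing from the $A$- to the $B$-resolution changes $a(s)-b(s)$ by $-2$, so the maximal degree $a(s)-b(s)+2|s|-2$ of a state's contribution is \emph{preserved} precisely when the switch \emph{splits} a circle ($|s'|=|s|+1$), and drops by $4$ when it \emph{merges} two circles ($|s'|=|s|-1$). Your step two asserts the opposite (``equality only when the switch merges two circles''). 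Consequently a state attaining the top degree $M$ would have to be reachable from $s_A$ by switches \emph{every one of which splits} a circle, not merges; note that under your stated premise the hypothesis of adequacy would never actually be needed, whereas a $1$-edge loop genuinely can produce a second top-degree state whose sign cancels the $\pm1$ from $s_A$ --- so adequacy must enter, and it enters by blocking splits, not merges.

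The second inversion is in your use of adequacy itself: a $1$-edge loop of $G_A$ at a crossing means both arcs of its $A$-resolution lie on the \emph{same} circle of $s_A$, and switching such a crossing \emph{splits} that circle (or, on a surface, rearranges it into non-contractible ones). Hence the absence of $1$-edge loops forces the first switch to \emph{merge} two distinct contractible circles, so $|s_1|=|s_A|-1$, the maximal degree strictly drops by $4$, and since it is non-increasing under all further switches (including the surface-only ``rearrange'' case with $|s'|=|s|$, which also lowers the degree), no state other than $s_A$ contributes to degree $M$. You assert instead that adequacy forces the first switch to split. The two sign errors cancel to yield the correct conclusion, but as written both intermediate claims are false and the argument would not survive being written out without correcting them. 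The remaining ingredients of your plan --- reducing $J_0$ to $\langle D\rangle_0$, and deducing the $b_n$ statement from the $a_m$ statement via $A\mapsto A^{-1}$ (equivalently the mirror diagram $D^*$, as the paper does) --- are correct and match the paper.
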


\begin{proof} 
Let $c=c(D),$ denote the number of crossings of $D$ and consider the all-$A$   state, $s_A$.  Then,  $a(s_A) = c$ and $b(s_A)=0$.  By the  definition of $\langle D\rangle_0$ (see
 Equation ~(\ref{eqn:B0}), the contribution of $s_A$ is
\[A^c(-A^2-A^{-2})^{|s_A|-1}\]
 The highest degree here, then, is $c+2|s_A|-2$, and the coefficient belonging to it is $(-1)^{|s_A|-1} = \pm 1$. 

Now we will show that all the  other states have degrees less that $c+2|s_A|-2$. We can view any state as being obtained from $s_A$ by a finite series of changing an $A$ resolution to a $B$ resolution. We can write this series out as $s_A \to s_1 \to s_2 \to \cdots \to s_k$. We will show that $s_{i+1}$ has degree at most $s_i$. First, note that $a(s_i) = a(s_{i-1}) - 1$ and $b(s_i) = b(s_{i-1}) + 1$. Next, by changing a single resolution, we are doing one of the following:

\begin{itemize}
	\item We merge two contractible circles to a contractible one (so $|s_i| = |s_{i-1}| - 1$).
	\item We split a contractible circle into two contractible ones (so $|s_i| = |s_{i-1}| + 1$).
	\item We split a contractible circle into two non-contractible ones (so $|s_i| = |s_{i-1}| - 1$).
	\item We merge two non-contractible circles into a contractible one (so $|s_i| = |s_{i-1}| + 1$).
	\item We merge a contractible and non-contractible circle into a non-contractible one (so $|s_i| = |s_{i-1}| -1$).
	\item We re-arrange  a non-contractible circle of  $s_{i-1}$ to a non-contractible one of  $s_i$ (so $|s_i| = |s_{i-1}|$).
\end{itemize}

In particular, each resolution change will either increase the number of state circles by one, leave it the same, or decrease it by one. As a result, the highest degree
in the contribution of $s_i$ to. $\langle D\rangle_0$ will be less or equal to this of $s_{i-1}$.

As the highest degree contribution  to $\langle D\rangle_0$ coming from $s_A$ is $c + 2|s_A|$, while the highest degree contribution coming from $s_k\neq s_A$ is $c - 2k + 2|s_k|-2$,
in order for $s_k$ to contribute to the highest degree of  $\langle D\rangle_0$, we would need to have
\begin{align*}
         c + 2|s_A| - 2 &\le c - 2k + 2|s_k| - 2 \\
\implies          |s_A| &\le |s_k| - k.
\end{align*}
This  would mean that each state change $s_{i-1} \to s_i$ must increase the number of state circles by by one, limiting what sort of changes we can make from the five possibilities discussed above.
However, since all the state circles in $s_A$ are contractible and $G_A$ has no 1-edge loops
the first change $s_A \to s_1$, will merge two contractible circles to one, so $|s_1| = |s_A| - 1$ and the highest degree of $s_1$ is strictly less that this of $s_A$. Since this degree cannot increase during the change from  
$s_A$ to $s_k$,
the contribution of $s_k$ has degree less than the contribution of $s_A$. So it does not contribute to the highest degree of $\langle D\rangle_0$, and we are done with part (1).

To see part (2), let $D^{*}\subset F\subset M$ denote the link diagram obtained from $D$ by switching all the crossings of $D$ simultaneously.
By the definition we can see that  $\langle D^{*}\rangle_0$  is obtained from $\langle D\rangle_0$  by changing $A$ to $A^{-1}$. Thus, we have $|b_n(D)| =|a_m(D^{*})|$ and the conclusion follows from part (1).
\end{proof}

Now we turn to the second lemma, that treats the second and penultimate coefficients of $J_0(D)$.

\begin{lem}
\label{lem:secondCoeff}
Suppose that  $D=\pi(L)\subset F\subset M$  is a link diagram  and let $a_{m-1}, b_{n+1}$ denote the second and penultimate  coefficients of $J_0(D)$.
\begin{enumerate}

\item If $D$ is geometrically $A$-adequate then, we have $|a_m|  = 1$ and $|a_{m-1}| = e'_A - |s_A| + 1$.

\item If $D$ is geometrically $B$-adequate then, we have $ |b_n| = 1$ and $|b_{n+1}| = e'_B - |s_B| + 1$.
\end{enumerate}
\end{lem}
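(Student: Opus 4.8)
The plan is to carry out the bookkeeping from the proof of Lemma \ref{lem:firstCoef} one ``level'' deeper, and then cancel the contributions that do not affect $|a_{m-1}|$. As before it suffices to treat part (1); part (2) follows by applying part (1) to the mirror diagram $D^{*}$, since $\langle D^{*}\rangle_0$ is obtained from $\langle D\rangle_0$ by $A\mapsto A^{-1}$, so $|b_{n+1}(D)|=|a_{m-1}(D^{*})|$. The claim $|a_m|=1$ is already Lemma \ref{lem:firstCoef}; we restate it only because Theorem \ref{prop:JonesPoly} packages the two together.

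First I would pin down which states can possibly contribute to the top two degrees of $\langle D\rangle_0$. Recall from Lemma \ref{lem:firstCoef} that a state $s$ obtained from $s_A$ by $k$ resolution changes contributes highest degree $c-2k+2|s|-2$, and that along any chain $s_A\to s_1\to\cdots\to s_k$ the quantity (number of circles) changes by $\pm 1$ or $0$ at each step, so $|s|\le |s_A|+k$ always, with the first step forced to be a merge of two contractible circles (using geometric $A$-adequacy: all circles of $s_A$ contractible, $G_A$ has no $1$-edge loop), hence $|s_1|=|s_A|-1$. Thus the top-degree contribution of $s_A$ is $c+2|s_A|-2$ and no other state reaches it: $|a_m|=1$. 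For the next degree $c+2|s_A|-4$, a state $s$ reached in $k$ steps contributes there only if $c-2k+2|s|-2\ge c+2|s_A|-4$, i.e. $|s|\ge |s_A|+k-1$. Given that the first step is already a forced merge ($-1$), this forces $k=1$ (a single resolution change, giving a circle count of $|s_A|-1$, top degree $c+2|s_A|-4$), since for $k\ge 2$ we would need the $k-1$ steps after the forced first merge to increase the count every time, i.e. $|s|=|s_A|-1+(k-1)=|s_A|+k-2<|s_A|+k-1$. So the only states contributing to the coefficient of $t^{m-1}$ (equivalently, degree $c+2|s_A|-4$ of $\langle D\rangle_0$) are: (a) $s_A$ itself, via the second-highest term of $(-A^2-A^{-2})^{|s_A|-1}$, contributing $(-1)^{|s_A|-2}(|s_A|-1)$; and (b) the $c$ single-change states $s_A^{(e)}$ obtained by switching the resolution at a single crossing $e$, each of which (being a forced merge) has $|s_A^{(e)}|=|s_A|-1$ and contributes $(-1)^{|s_A|-2}$ from the top term of $A^{c-2}(-A^2-A^{-2})^{|s_A|-2}$.

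Summing, the coefficient of $t^{m-1}$ has absolute value $\big|\,{-(|s_A|-1)} + c\,\big| = c-|s_A|+1$ — but this overcounts, because two single crossing changes at edges $e_1,e_2$ that are parallel in $G_A$ and cobound a disk on $F$ produce isotopic multicurves on $F$, hence must be identified in $\mathcal D(F)$ and the corresponding terms of $\langle D\rangle_0$ collapse. Here I would argue the converse too: if $e_1,e_2$ are edges of $G_A$ between the same pair of vertices but $e_1\cup e_2$ does \emph{not} bound a disk on $F$, then the two single-change multicurves are non-isotopic on $F$, so their contributions do not cancel; and switching at non-parallel edges obviously gives distinct (and distinct-from-$s_A$) multicurves. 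Therefore the $c$ terms from (b) collapse to exactly $e'_A$ distinct terms, each still with coefficient $(-1)^{|s_A|-2}$ (I must check that collapsing parallel terms does not introduce a sign cancellation — it does not, since the collapsing terms are literally equal), and we get $|a_{m-1}| = \big|{-(|s_A|-1)} + e'_A\big| = e'_A - |s_A| + 1$, using that $e'_A\ge |s_A|-1$ because $G'_A$ is connected (it has $|s_A|$ vertices, $D$ being a connected diagram, or more precisely because each component contributes).

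The main obstacle I expect is the careful verification in the previous paragraph that the map from ``single resolution changes at an edge of $G_A$'' to ``multicurves on $F$ up to isotopy'' has fibers exactly the parallelism classes defining $G'_A$ — i.e. that $e_1\cup e_2$ bounding a disk on $F$ is \emph{equivalent} to the two resulting state multicurves being isotopic on $F$, not merely implied by it. This requires understanding precisely how a single $A\to B$ switch at a crossing modifies $s_A$ as a curve system on $F$ (it replaces a small band, turning one circle into one circle after a forced merge), and a cut-and-paste/innermost-disk argument on $F$ to promote ``abstractly isotopic as multicurves'' to ``$e_1\cup e_2$ bounds a disk.'' A secondary, more routine point is confirming that no degree-$(c+2|s_A|-4)$ cancellation occurs between the $s_A$-contribution $-(|s_A|-1)$ and the edge contributions $+e'_A$; since both have the same sign $(-1)^{|s_A|-2}$ and $e'_A\ge |s_A|-1$, the absolute value is simply the sum $e'_A-(|s_A|-1)$, as claimed. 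I would close the write-up by noting that part (2) now follows verbatim from part (1) applied to $D^{*}$, since $D^{*}$ is geometrically $B$-adequate iff $D$ is geometrically $A$-adequate and $G_B(D)=G_A(D^{*})$.
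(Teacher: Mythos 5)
Your reduction to single resolution changes is where the argument breaks. The coefficient $a_{m-1}$ sits at $A$-degree $(c+2|s_A|-2)-4=c+2|s_A|-6$, so a state $s_k$ obtained from $s_A$ by $k$ switches can reach it whenever its top degree $c-2k+2|s_k|-2$ equals $c+2|s_A|-6$, i.e.\ whenever $|s_k|=|s_A|+k-2$. This is attainable for \emph{every} $k\ge 1$: one forced merge followed by $k-1$ splits. Your threshold $|s|\ge |s_A|+k-1$ is the condition for reaching degree $c+2|s_A|-4$, which is only two (not four) below the top and is not where $a_{m-1}$ lives; that off-by-one is what makes you conclude $k=1$ is forced. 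In fact, for each parallel family of $k$ edges of $G_A$ whose pairs cobound disks on $F$, all $2^{k}-1$ states obtained by switching a nonempty subset of the family contribute at this degree, and they are indispensable: the family's net contribution is a single $\pm1$ because $\sum_{j=1}^{k}\binom{k}{j}(-1)^{j}=-1$, and this binomial cancellation is precisely how $e_A=c$ gets replaced by $e'_A$ in the final formula.

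The second problem is the ``collapse'' of the $c$ single-change states to $e'_A$ terms. The sum defining $\langle D\rangle_0$ in Equation~(\ref{eqn:B0}) runs over \emph{states}, not over isotopy classes of multicurves: two distinct states whose resulting curve systems happen to be isotopic on $F$ contribute two separate, equal monomials, which add rather than merge. So with only the $k\le 1$ states your computation actually yields $\left|-(|s_A|-1)+c\right|=c-|s_A|+1$, which is wrong whenever $G_A$ has parallel edges; e.g.\ for the alternating trefoil diagram with $|s_A|=2$ one has $c=3$, $e'_A=1$, and $a_{m-1}=0$, not $2$. Your two errors compensate numerically (discarding the $k\ge2$ states and then identifying the $k$ parallel single-change states each amount to replacing $k$ by $1$), but neither step is valid, and the ``isotopic multicurves'' criterion you propose to verify is not the mechanism at work. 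The correct route is to keep all states, show that a contributing state must have all its switched crossings in a single parallel, disk-bounding family of $G_A$ (a split can only occur at a $1$-edge loop of the intermediate state graph, and switching a parallel edge that does not cobound a disk creates non-contractible circles whose removal costs a merge), and then cancel within each family by the binomial identity.
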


\begin{proof}
Let $c=c(D)$ denote the number of crossings of $D$. We know that the highest degree of $\langle D\rangle_0$ is $c + 2|s_A|$. Then the second highest degree has exponent $c + 2|s_A| - 4$. A contribution to this degree can come from either the second highest degree of $s_A$, or from the highest degree of a state $s_k$ in which $k\neq 1$ crossings of $D$ are assigned the $B$ resolution.

First, we will deal with $s_A$. Recall that the part of $D_0$ coming from $s_A$ is
\begin{align*}
A^{c}(-A^2-A^{-2})^{|s_A|-1} &= (-1)^{|s_A|-1} A^c (A^2 + A^{-2})^{|s_A|-1}\\
	&= (-1)^{|s_A|}A^{c} \sum_{i=0}^{|s_A|} \binom{|s_A|-1}{i} A^{2|s_A|-2i-2} A^{-2i}\\
	&= (-1)^{|s_A|-1}\sum_{i=0}^{|s_A|-1} \binom{|s_A|-1}{i} A^{c+2|s_A|-4i-2}.
\end{align*}
In particular, the second highest degree is $c+2|s_A|-6$ (when $i=1$), and the coefficient is $(-1)^{|s_A|-1}(|s_A|-1)$.
Next, we deal with states  $s_k\neq s_A$. As in the proof of Lemma ~\ref{lem:firstCoef}, we can write $s_k$ as a finite series of resolution changes, $s_A\to s_1 \to \cdots \to s_k$. We claim that $s_k$ can only contribute to the degree 
$c+2|s_A|-6$ if all resolution changes happen to parallel edges in $G_A$ (i.e edges that are adjacent to the same pair of vertices and any two of which encircle a disc on $F$). 

Recall that the highest degree $s_k$ contributes is $c - 2k + 2|s_k|$. For this to contribute to the second highest degree of $\langle D\rangle_0$, we must have

\begin{align*}
         c + 2|s_A| - 6 &= c - 2k + 2|s_k|-2\\
\implies      |s_A|     &= |s_k| - k + 2.
\end{align*}

This means that, in our series from $s_A$ to $s_k$, we must either increase the number of state circles in all but exactly two resolution changes, where we don't change the number at all, or we must decrease the number of state circles exactly once, and the rest of resolution changes must increase it. As in Lemma ~\ref{lem:firstCoef}, we know that $s_A \to s_1$ must decrease the number of state circles. As such, for $s_k$ to contribute to the second highest degree the remaining resolution changes must increase the number of state circles.

As $s_A$ has no non-contractible circles, any following state will only have them if we introduce them from a resolution change. Note, that we need to change at least two resolutions from $s_A$ to create non-contractible  state circles.
However, turning these circles into contractible ones again will require merging and this step will decreasing the number of state circles. Thus such a sequence cannot contribute
to $a_{m-1}$. Thus, to increase the number of state circles, we must, after $s_A\to s_1$, always split a single contractible circle into two contractible circles. This can only happen in the change $s_{i-1}\to s_i$, however, if the there is a 1-edge loop in the state graph $G_{s_{i-1}}$. Such 1-edge loops are created when we merge two state circles together. In our series, this can only happen in the first state change, $s_A\to s_1$, and so all other changes must be adjacent to the same two state circles.
There are now two cases to consider. Let $e_1$ be the edge  of $G_A$ affected, during the change $s_A\to s_1$. Then either
\begin{itemize}
\item  $s_{i-1}\to s_i$ affects an edge $e_i$, with $e_1\cup e_i$ bounding a disk on $F$, or 
\item  $s_{i-1}\to s_i$ affects an edge $e_i$, with $e_1\cup e_i$ not bounding a disk on $F$.
\end{itemize}
The two cases are illustrated in Figure \ref{cases}. If we are in the second case, we will create two non-contractible circles, each parallel to the curve $e_i \cup e_2$. If we are in the first case, we create two contractible circles, and so are fine. As such, $s_k$ only contributes to the second highest degree if all resolution changes happen to parallel edges that bound a disk on $F$. We can view this as a single edge of $G_A'$.

\begin{figure}
\centering
\def\svgwidth{0.8\columnwidth}
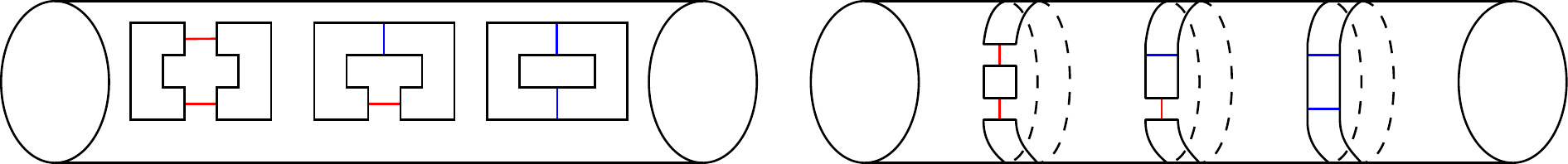
\caption{{The two cases of changing  two edges of $G_A$  adjacent to the same pair of vertices to edges of $G_B$.}}
\label{cases}
\end{figure}

Each family of parallel edges has several states associated to it: if the family has $k$ parallel edges (and thus $k$ crossings), and we want a state to have $1\le j\le k$ differences from $s_A$, then there are $\binom{k}{j}$ such states. While the highest degree remains the same, other values do change. If a state $s$ has $j$ changes in resolutions from $s_A$, then $a(s) = c-j$, $b(s)=j$, and $|s| = |s_A|-2+j$. Then the highest degree and coefficient such a state contributes is
\begin{align*}
(-1)^{|s|}A^{a(s)-b(s)}A^{2|s|-2} &= (-1)^{|s_A|+j-3}A^{c-2j+2|s|-2}\\
&= (-1)^{|s_A|-3}(-1)^j A^{c+2|s_A|-6}.
\end{align*}
Summing over all possible states for this family, then, the family contributes the coefficient
\[(-1)^{|s_A|-3}\sum_{j=1}^k \binom{k}{j} (-1)^{j}.\]
Using the binomial theorem,
\begin{align*}
0=(1+(-1))^k &= \sum_{j=0}^k \binom{k}{j} (-1)^j\\
&= 1 + \sum_{j=1}^k \binom{k}{j} (-1)^j\\
&= 1 + (-1)
\end{align*}
so the coefficient contributed by a single family is just $(-1)^{|s_A|-1}$.

Now, adding the coefficient we get from $s_A$ to  all the coefficients we get from edges of $G_A'$, we get the coefficient is:
\begin{align*}
|a_{m-1}| &= | (-1)^{|s_A|-2} e_A' + (-1)^{|s_A|-1} (|s_A|-1) |\\
		  &= | (-1)^{|s_A|-2} (e_A' - |s_A|+1)\\
		  &= e_A' - |s_A|+1
\end{align*}
and we are done with part (1).

To prove part  (2)  we can apply the argument of (1) to the diagram  $D^{*}$ as in the proof of Lemma \ref{lem:firstCoef}. This finishes the proof of the lemma and that of Theorem ~\ref{prop:JonesPoly}.

\end{proof}
\vskip 0.02in

The notion of geometric adequacy is well suited for the connections of skein invariants to geometry of the link complement that we explore in this paper. 
Recently, Boden, Karimi and Sikora  have considered link diagrams  on a surface $F=F\times\{0\}$ inside in thickened surfaces $F\times [-1, 1]$
and they also defined notions of $A$-adequacy  and $B$-adequacy. We now compare their definitions to ours.
In the terminology of \cite{ThickTaitConj}
a link diagram $D$ on $F$ is called $A$-adequate if for any state $s$ that differs only by a single resolution
from $s_A$ we have the following: either $|s_t|\leq |s_A|$ or the number of non-contractible state circles in $s$ is different than this in $s_A$. One defines $D$ being $B$-adequate in a similar way.

\begin{lem}\label{lem:comp}
Suppose that $D=\pi(L)\subset F\subset M$  is a link diagram such that
all the state circles in $s_A$ are contractible on $F$. Then, if $D$ is geometrically $A$-adequate, it is also $A$-adequate.
\end{lem}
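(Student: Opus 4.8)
The plan is to unwind the definition of $A$-adequacy from \cite{ThickTaitConj} and reduce the statement to the elementary fact that a single resolution change at a non-loop edge of $G_A$ merges two state circles. Recall that $D$ is $A$-adequate in the sense of \cite{ThickTaitConj} provided that for every state $s$ obtained from $s_A$ by changing the resolution at exactly one crossing $e$, one has either $|s_t|\le |s_A|$ or $|s_{nt}|\neq |(s_A)_{nt}|$. So I would fix such a crossing $e$ and the associated state $s$, and argue that the first alternative always holds.

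Since $D$ is geometrically $A$-adequate, $G_A$ has no 1-edge loop; in particular $e$ is not a 1-edge loop, so the two sub-arcs of the state circles of $s_A$ meeting at $e$ lie on two \emph{distinct} circles. Examining the local picture at the crossing disk, exchanging the $A$-smoothing at $e$ for the $B$-smoothing then fuses these two distinct circles into a single one and leaves every other circle of $s_A$ untouched; thus $|s| = |s_A| - 1$. Since the contractible circles of $s$ are among all the circles of $s$, this gives $|s_t| \le |s| = |s_A| - 1 \le |s_A|$, which is exactly the first alternative in the definition of $A$-adequacy. As $e$, and hence $s$, was arbitrary, $D$ is $A$-adequate. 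In fact this argument shows that the absence of 1-edge loops in $G_A$ alone already forces $A$-adequacy; the contractibility of the circles of $s_A$, which is built into geometric $A$-adequacy, is not needed for the implication.

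The only step that deserves care is the claim that changing the smoothing at a non-loop edge of $G_A$ \emph{decreases} the number of state circles by one rather than increasing it, i.e. that it is a merge and not a split. This is the standard observation that a band move on a closed $1$-manifold performed along two arcs lying in different components fuses those components; it is verified by checking the few local configurations at the crossing disk together with the two possible ways the surrounding arcs of the state circles close it up. Everything else is bookkeeping with the definitions.
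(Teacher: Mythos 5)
Your proof is correct and takes essentially the same route as the paper's: both arguments reduce to the observation that, since $G_A$ has no 1-edge loops, a single resolution change from $s_A$ must merge two \emph{distinct} state circles rather than split one, so the circle count cannot increase. Your direct chain $|s_t|\le |s| = |s_A|-1 \le |s_A|$ is a slightly cleaner packaging of the paper's elimination of the two ways $|s_t|$ could grow, and your aside that only the no-loop condition (not contractibility of the $s_A$-circles) is used is consistent with the definition of $A$-adequacy as stated in the paper.
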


\begin{proof}
To show that $D$ is $A$-adequate, we must show that, for any state $s$ adjacent to $s_A$, either $|s_t|\le |s_A|$ or $s$ and $s_A$ have a different set of non-contractible loops. There are two ways a state change $s_A\to s$ can increase  $|s_t|$: either we split a contractible  state circle into two such circles, or we merge two essential circles in $s_A$ into a contractible  circle. By assumption, as $s_A$ has only contractible circles,  we must split a single state circle into two. However, in order to split a state circle, we must have an edge of $G_A$ connecting that state circle to itself. As $G_A$, by assumption, has no 1-edge loops, this cannot happen, and so we are done.
\end{proof}

The converse of  Lemma ~\ref{lem:comp}  is not true.
The diagram $D$ of Figure \ref{alldisks}	 is $A$-adequate in the sense of ~\cite{ThickTaitConj}  but is not geometrically $A$-adequate: indeed, while the state circles in $s_A$ are contractible,  each of the four
states that are obtained from $s_A$ by a single by a single resolution change contains non-contractible circles. Thus $D$ is  $A$-adequate. However, $G_A$ contains 1-edge loops,
hence $D$ is not    geometrically $A$-adequate.

\begin{figure}
	\centering
		\def\svgwidth{.65\columnwidth}
		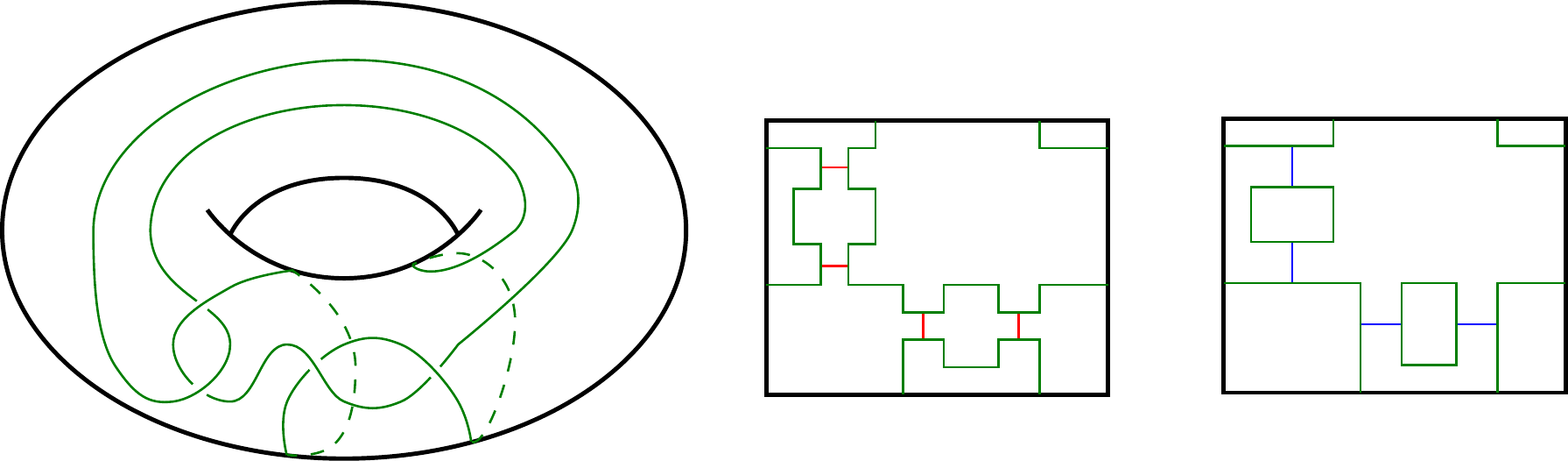

		\caption{A  link diagram $D=\pi(L)$ (left) 
		together with the  graphs $G_A$ (center) and $G_B$ (right). All the state circles in $s_A$ and $s_B$ are contractible on the torus. The diagram $D$  geometrically $B$-adequate but not
		geometrically $A$-adequate. In fact all the four edges of $G_A$ are 1-edge loops.}
\label{alldisks}	
		
\end{figure}

\medskip

\section{Guts of surfaces and Kauffman brackets}
In this section we focus on links that admit alternating projections on surfaces in 3-manifolds. We find that under suitable diagrammatic conditions such links are geometrically adequate  on one hand, and on the other hand they fit into the class of weakly generalized alternating links studied in \cite{WGA}. This will allow us to combine our work in the last section with the geometric techniques of 
\cite{WGA} and prove
 Theorems  \ref{thm:GutsAndJones}  and  \ref{thm:volume}.

\subsection{Reduced alternating link projections}
Suppose $M$ is a compact, orientable, irreducible 3-manifold with empty or incompressible boundary and  $F$ a closed, connected, orientable  surface in $M$.
Next we introduce several properties and definitions about projections of links
$L\subset F\times [-1, \ 1]\subset M $ on $F$.
 Some of these  properties are  directly quoted from ~\cite{WGA} and others are suitably adapted to fit our purposes better.

\begin{defn} \label{prime}
A  link diagram $\pi(L)\subset F$ is \textit{prime} if, whenever a disk $D\subset F$ has $\bndry D$ intersecting $\pi(L)$ transversely exactly twice, then either:
\begin{itemize}
	\item $F=S^2$, and either $\pi(L)\cap D$ is a single arc or $\pi(L)\cap (F\setminus D)$ is; or
	\item $F$ has positive genus, and $\pi(L)\cap D$ is a single arc. 
\end{itemize}
\end{defn}

\begin{defn} \label{defn:reducible}
We say a link diagram $\pi(L)\subset F$ is \textit{reduced alternating} if
\begin{enumerate}
\item each component of $L$ projects to at least one crossing in $\pi(L)$, 
\item  $\pi(L)$ is prime and alternating on $F,$  and 
\item for every essential,  simple closed curve $\gamma$ on $F$ that intersects $\pi(L)$ at exactly two points near a crossing, one of the two sub-arcs of  $\pi(L)$ with endpoints on $\gamma$
	contains no crossings of $\pi(L)$.
\end{enumerate} 
\end{defn}

We note for Definition \ref{prime} the authors   of \cite{WGA} use the term ``weakly prime".
We also note that Definition \ref{defn:reducible} is different than the definition of reduced diagrams given in  ~\cite{WGA}, in that condition condition (3) is not required in their definition. For alternating projections of $F=S^2$ in $M=S^3$ the two definitions are equivalent.

Given an alternating  link projection  $\pi(L)\subset F\subset M$ for each crossing of $\pi(L)$ we can label the four regions around it by the letters $A$ and $B$ in an alternating fashion. This is done so that the two opposite regions of the crossing that are 
merged during the $A$ splitting are labeled by $A$. Similarly, the two opposite regions of the crossing that are 
merged during the $B$ splitting are labeled by $B$. This way the corners of every region of $F\setminus \pi(L)$ receive the label $A$ or $B$.
\begin{defn}\label{check} With the notation and setting as above, we will say that the link diagram $\pi(L)$ is 
\textit{checkerboard colorable} if for every region $R$ of $F\setminus \pi(L)$ the letters at all corners of $R$ are the same.  Thus every region of a checkerboard  colorable diagram is labelled by $A$ or $B$.
\end{defn}

Next we recall two complexity functions for link diagrams $\pi(L)\subset F\subset M$ from  \cite{WGA}.

\begin{defn}\label{ourreduced}
The \textit{edge representativity} $e(\pi(L),F)$ is the minimum number of intersections between $\pi(L)$ and any essential curve on $F$. If there are no essential curves, then we say $e(\pi(L),F)=\infty$.

The \textit{representativity} $r(\pi(L),F)$ is the minimum number of intersections between $\pi(L)$ and the boundary of  any compressing disk for $F$. If there are no compressing disks for $F$, then we say $r(\pi(L),F) =\infty$.
\end{defn}

As an example to clarify the definitions above we discuss the alternating link diagram $\pi(L)$ of Figure \ref{alldisks}	viewed on a standard Heegaard torus  $F=T^2$  in $S^3$.
We have $e(\pi(L),F)=r(\pi(L),F)=2$ and the diagram is checkerboard colorable,  prime and all the regions of $T^2\setminus \pi(L)$ are disks. However, $\pi(L)$  is not reduced in the sense of Definition \ref{defn:reducible}.
The next proposition  shows this phenomenon doesn't happen when $e(\pi(L), F)>2$. 

\begin{prop} \label{prop:ReducedEdgeRep}
Let $\pi(L)\subset F\subset M$ be an alternating link diagram such that $\pi(L)$ is checkerboard colorable and all the regions of $F\setminus \pi(L)$ are disks. 
Then $\pi(L)$ is reduced if and only if 
\begin{itemize}
\item $\pi(L)$ is  prime,
\item each component of $L$ projects to at least one crossing in $\pi(L),$ and
\item the edge representativity satisfies
$e(\pi(L),F)>2$.
\end{itemize}
 \end{prop}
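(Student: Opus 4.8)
The plan is to prove the two directions separately, with the forward direction being essentially a matter of unpacking definitions and the reverse direction requiring the real work. For the forward implication, suppose $\pi(L)$ is reduced in the sense of Definition~\ref{defn:reducible}. Primeness and the condition that each component projects to a crossing are literally parts of that definition, so only the inequality $e(\pi(L),F)>2$ needs argument. Here I would argue by contradiction: if there were an essential curve $\gamma$ meeting $\pi(L)$ in at most two points, then since $\pi(L)$ is connected (its regions are disks) and $\gamma$ is essential, $\gamma$ cannot miss $\pi(L)$ entirely, and a single intersection point is impossible by a parity/transversality argument, so $\gamma$ meets $\pi(L)$ in exactly two points. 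One then pushes $\gamma$ to run through a crossing region and invokes condition (3) of Definition~\ref{defn:reducible} together with primeness to derive a contradiction: condition (3) forces one of the two arcs cut off by $\gamma$ to be crossing-free, and then the checkerboard coloring plus primeness show $\gamma$ bounds a disk on the appropriate side, contradicting essentiality. This is the step I expect to be the main obstacle, because the bookkeeping of how an essential curve meeting $\pi(L)$ twice can be isotoped into a standard position near a crossing — and ruling out the degenerate configurations — is exactly the kind of case analysis that is easy to get wrong.

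For the reverse implication, assume the three bulleted conditions. Conditions (1) and (2) of Definition~\ref{defn:reducible} are among the hypotheses (primeness and each component having a crossing), and alternating-ness is assumed throughout, so only condition (3) of Definition~\ref{defn:reducible} remains to be verified. So let $\gamma$ be an essential simple closed curve meeting $\pi(L)$ in exactly two points near a crossing $x$, and let $\alpha_1,\alpha_2$ be the two sub-arcs of $\pi(L)$ cut off by $\gamma$. I want to show one of them is crossing-free. Suppose not: both $\alpha_i$ contain a crossing. The idea is to use the hypothesis $e(\pi(L),F)>2$: since $\gamma$ meets $\pi(L)$ only twice and these two points sit near the crossing $x$, I can perform a small isotopy of $\gamma$ across $x$ to produce a curve $\gamma'$ that still meets $\pi(L)$ in exactly two points (or possibly in zero points, after sliding past the crossing) — and if $\gamma'$ is still essential this already contradicts $e(\pi(L),F)>2$. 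If instead $\gamma'$ becomes inessential, then $\gamma$ together with the local picture at $x$ exhibits $\gamma$ as bounding a once-punctured or twice-punctured disk region, and then primeness (Definition~\ref{prime}) forces one of $\alpha_1,\alpha_2$ to be a single crossing-free arc, which is condition (3).

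A cleaner way to organize the reverse direction may be to separate the curve $\gamma$ into cases according to whether $\gamma$ bounds a disk in $F$ or not. If $\gamma$ is non-separating or bounds no disk, it is essential and meets $\pi(L)$ twice, directly contradicting $e(\pi(L),F)>2$ unless one can isotope away an intersection — but because the two intersection points are adjacent along $\gamma$ near the crossing, an innermost-arc argument on the disk/annulus cobounded by $\gamma$ and an arc of $\pi(L)$ lets us reduce the intersection number, again contradicting $e(\pi(L),F)>2$. If $\gamma$ bounds a disk $D\subset F$, then $\partial D$ meets $\pi(L)$ transversely twice and primeness applies: $\pi(L)\cap D$ is a single arc (when $F$ has positive genus, or one of the two sides when $F=S^2$), which is precisely the assertion of condition (3). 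Throughout, I would lean on the standing assumption that all regions of $F\setminus\pi(L)$ are disks, which guarantees $\pi(L)$ is connected and fills $F$, so that curves disjoint from $\pi(L)$ are automatically inessential — this is what makes the two-point intersection case the only one that needs care. The main technical hurdle, as noted, is making the ``isotope $\gamma$ past the crossing'' move rigorous and tracking how the intersection number and the essentiality of $\gamma$ change under it; I would handle this with an explicit local model of a neighborhood of the crossing $x$ and the two arcs of $\gamma$ near it.
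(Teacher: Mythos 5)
Your direction ``the three bullet conditions imply reduced'' is essentially correct and agrees with the paper: once $e(\pi(L),F)>2$, no essential curve meets $\pi(L)$ in exactly two points, so condition (3) of the definition of reduced is vacuous, and for a curve bounding a disk the conclusion is exactly primeness. The detour about isotoping $\gamma$ past the crossing and tracking whether the resulting curve stays essential is unnecessary --- the mere existence of an essential curve meeting $\pi(L)$ transversely in two points already forces $e\le 2$ --- but your ``cleaner'' paragraph lands on the right argument.

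The genuine gap is in the converse, showing that a reduced diagram has $e(\pi(L),F)>2$. After evenness (from checkerboard colorability) and ruling out $e=0$ (from the disk-region hypothesis), the entire content of the proposition is ruling out $e=2$, and this is precisely the step you defer as ``bookkeeping.'' Your sketch --- condition (3) produces a crossing-free sub-arc $\ell$, and then ``checkerboard coloring plus primeness show $\gamma$ bounds a disk'' --- does not follow as stated: a priori $\gamma$ could be essential with $\ell$ joining its two intersection points in a homotopically nontrivial way. Writing $\gamma=\gamma_1\cup\gamma_2$, one must analyze the loops $\ell\cup\gamma_i$. If some $\ell\cup\gamma_i$ bounds a disk, $\gamma$ can be isotoped off $\pi(L)$, giving an essential curve in a complementary region and contradicting the disk-region hypothesis. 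If $\ell$ is an entire component of $\pi(L)$, one contradicts the hypothesis that every component meets a crossing --- this is the only place that hypothesis is used, and your write-up never invokes it. If $\ell\cup\gamma_i$ is essential and parallel to $\gamma$, the other loop is trivial and one is back in the first case. Finally, if $\ell\cup\gamma_i$ is essential and not parallel to $\gamma$, the paper derives a contradiction with checkerboard colorability: since $\ell$ carries no crossings, the regions on either side of $\ell$ receive the same color, producing a region adjacent to itself across an arc of the diagram. Without the second and fourth cases in particular, the argument is incomplete.
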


\begin{proof}
First, suppose $e(\pi(L),F)>2$ and $\pi(L)$ is  prime. We only need to prove that $\pi(L)$ satisfies part (3) of Definition \ref{defn:reducible}. So suppose $\gamma$ is a simple closed curve on $F$ intersecting $\pi(L)$ exactly twice. Because $e(\pi(L),F)>2$, we know that $\gamma$ cannot be essential, and so must bound a disk $E$ on $F$. If $F=S^2$, then, as $\pi(L)$ is prime, either $\pi(L)\cap E$ is a single arc without any crossings, or $\pi(L)\cap (F\setminus E)$ is. If $F\neq S^2$, then $\pi(L)\cap E$ is a single arc. In either case, we have one of the sub-arcs of $\pi(L)$ with endpoints on $\gamma$ contain no crossings, and so we are done.

Now suppose $\pi(L)$ is reduced alternating. Then we already know that $\pi(L)$ is prime, and each component of $L$ projects to at least one crossing in $\pi(L).$ We  need to show $e(\pi(L),F)>2$. Suppose not. As $D$ is checkerboard colorable, we must have $e(\pi(L),F)$ be an even number. If $e(D,F)=0$, then there is a region of $F\setminus \pi(L)$ that contains an essential curve. This would mean we have a non-disk region, and so cannot happen.

If $e(\pi(L),F) = 2$, then we can find some essential closed curve $\gamma$ intersecting $\pi(L)$ exactly twice. As $\pi(L)$ is reduced, this means that one of the two sub-arcs of $\pi(L)$ with endpoints on $\gamma$ must contain no crossings. Call this sub-arc $\ell$. We also have $\gamma$ split into two sub-arcs, $\gamma_1$ and $\gamma_2$. There are four cases to consider. 
First, we could have $\ell\cup \gamma_i$ bound a disk on $F$, for some $i=1,2$. We can use this disk to homotope $\gamma$ off of $D$, and so become an essential curve interesting our knot zero times, a contradiction.
Second, we could have $\ell$ form a single component. If it does, we then have a component of $L$ with no crossing in $D$, 
contradicting the assumption that each component of $L$ projects to at least one crossing on $F$, included in the definition of reduced.
Third, we could have $\ell \cup \gamma_i$ essential and parallel to all of $\gamma$, for some $i$, say $i=1$. But then $\ell \cup \gamma_2$ is homotopically trivial, and so we are in the first case, and get a contradiction.
Finally, we could have $\ell\cup\gamma_i$ essential and not parallel to $\gamma$. As $\pi(L)$ is checkerboard colorable and $\ell$ contains no crossings, everything to one side of $\ell$ must be the same color. But then we have a region adjacent to itself across a knot arc, contradicting $D$ being checkerboard colorable.
In any of the cases where $e(\pi(L),F) = 2$, we contradict one of our assumptions, and so it cannot happen. So then we are left with $e(\pi(L), F)>2$, and we are done.
\end{proof}

\begin{defn}
Following \cite{WGA} we say a link diagram $\pi(L)\subset F$ is \textit{weakly generalized alternating} if is prime, checkerboard colorable, alternating,
and the representativity satisfies $r(\pi(L), F)\ge 4$.
\end{defn}

We have the following:

\begin{cor} \label{reducedisWGA} A reduced alternating diagram  $\pi(L)\subset F$ that has disk regions and is checkerboard colorable is also weakly generalized alternating.
\end{cor}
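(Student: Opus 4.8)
The plan is to deduce this directly from Proposition \ref{prop:ReducedEdgeRep}, whose hypotheses exactly match those of the corollary. Since $\pi(L)\subset F$ is a reduced alternating diagram with disk regions that is checkerboard colorable, Proposition \ref{prop:ReducedEdgeRep} already gives us that $\pi(L)$ is prime, that each component of $L$ carries at least one crossing, and — crucially — that the edge representativity satisfies $e(\pi(L),F)>2$. Since the diagram is checkerboard colorable, $e(\pi(L),F)$ is even, so $e(\pi(L),F)>2$ forces $e(\pi(L),F)\geq 4$. Thus the only thing left to verify, in order to conclude that $\pi(L)$ is weakly generalized alternating, is that the representativity satisfies $r(\pi(L),F)\geq 4$.

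The key step is therefore to compare $r(\pi(L),F)$ with $e(\pi(L),F)$. First I would observe that if $F$ has no compressing disks in $M$ at all then $r(\pi(L),F)=\infty$ by definition and there is nothing more to prove, so I may assume $F$ is compressible. Given a compressing disk $E$ for $F$ realizing $r(\pi(L),F)$, its boundary $\partial E$ is an essential simple closed curve on $F$ — it cannot bound a disk on $F$, since together with $E$ that would make $F$ reducible or $\partial E$ a trivial curve, and an innermost/outermost argument lets me take $\partial E$ essential on $F$. Then by the very definition of edge representativity as the \emph{minimum} number of intersections of $\pi(L)$ with an essential curve on $F$, we get
\[
r(\pi(L),F) \;=\; |\partial E \cap \pi(L)| \;\geq\; e(\pi(L),F) \;\geq\; 4.
\]
Combining this with primeness, checkerboard colorability, and alternation (all of which are hypotheses or already established), $\pi(L)$ satisfies every clause of the definition of weakly generalized alternating.

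The only mild subtlety — and the point I would be most careful about — is the claim that $\partial E$ can be taken essential on $F$: one must rule out that the minimizing compressing disk meets $\pi(L)$ few times only because $\partial E$ is inessential on $F$, which would break the inequality $r\geq e$. This is handled by a standard incompressibility/irreducibility argument for $M$: if $\partial E$ bounded a disk $E'\subset F$, then $E\cup E'$ would be a sphere in the irreducible $M$, bounding a ball, and $E$ could be isotoped across it, so $E$ would not have been a genuine compressing disk. Hence no such minimizing disk exists with inessential boundary, and the inequality stands.
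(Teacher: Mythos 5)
Your proposal is correct and follows the same route as the paper: invoke Proposition \ref{prop:ReducedEdgeRep} to get $e(\pi(L),F)>2$, use checkerboard colorability to upgrade this to $e(\pi(L),F)\geq 4$, and then observe that the boundary of any compressing disk is essential on $F$, so $r(\pi(L),F)\geq e(\pi(L),F)\geq 4$. The only difference is that you spend a paragraph justifying that $\partial E$ is essential on $F$, which is already immediate from the definition of a compressing disk (its boundary does not bound a disk in $F$), so that extra care, while harmless, is not needed.
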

\begin{proof} We only need to check that the condition that $r(\pi(L), F)\ge 4$.
By Proposition \ref{prop:ReducedEdgeRep}, we have $e(\pi(L), F)>2$ which since the diagram is checkerboard colorable 
with disk regions, implies $e(\pi(L), F)\ge 4$.
Since any curve on $F$  bounding a compression disk  is also essential we are done.
\end{proof}

\begin{figure}[H]
	\centering
		\def\svgwidth{0.7\columnwidth}
		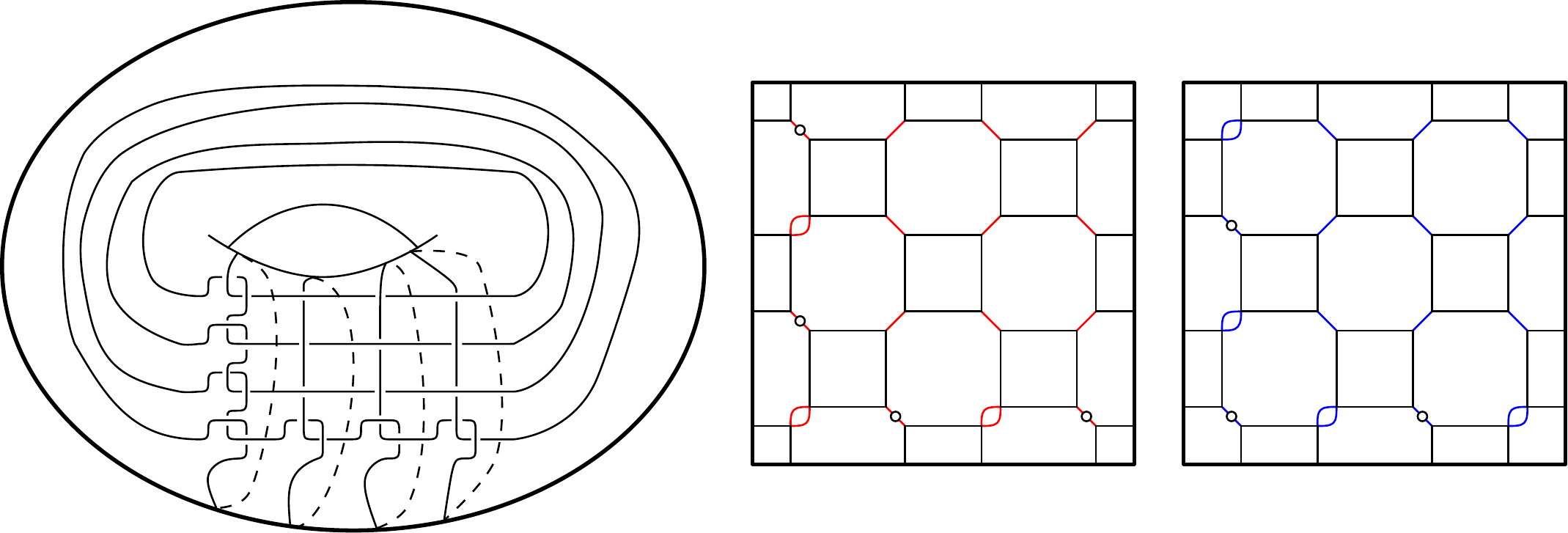

		\caption{ A reduced, checkerboard colorable, alternating diagram  $\pi(L)$ with disk regions, on a torus. In the terminology of \cite{WGA} it is weakly generalized alternating.
		Both $G_A$ \textit{(left)} and $G_B$ \textit{(right)} have no 1-edge loops making $\pi(L)$  geometrically adequate.}
\label{exampleadequate}
\end{figure}

Our next lemma together with Corollary \ref{reducedisWGA}  allow us to relate our work in Section 2 to the work of \cite{WGA} on weakly generalized alternating links.

\begin{lem}\label{conditions} 
Suppose  $\pi(L)$ is an alternating diagram on a projection   surface $F$ of genus at least 1 in a 3-manifold $M$. 
Suppose that $\pi(L)$ is reduced,  checkerboard colorable
and all regions of $F\setminus \pi(L)$ are disks. Then $\pi(L)$ is geometrically adequate.
\end{lem}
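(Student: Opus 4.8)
The plan is to show that if $\pi(L)$ is reduced, checkerboard colorable, alternating, and has all disk regions, then neither $G_A$ nor $G_B$ has a 1-edge loop, which (together with the disk-region hypothesis guaranteeing all state circles of $s_A$, $s_B$ are contractible) is exactly geometric adequacy. By the symmetry $D \mapsto D^*$ (switching all crossings, which interchanges the roles of $A$ and $B$) it suffices to rule out 1-edge loops in $G_A$. So I would argue by contradiction: suppose some crossing $x$ of $\pi(L)$ gives a 1-edge loop in $G_A$, meaning that the two arcs of the $A$-resolution at $x$ lie on a single state circle $C$ of $s_A$.

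First I would set up the local picture at $x$. Since $\pi(L)$ is checkerboard colorable and alternating, the two regions of $F\setminus \pi(L)$ labeled $A$ at the corners of $x$ are merged under the $A$-resolution; a 1-edge loop at $x$ means these two $A$-regions at $x$ are in fact the \emph{same} region $R$ of $F\setminus\pi(L)$. Now $R$ is a disk by hypothesis, and its boundary passes through the crossing $x$ twice. The key step is to produce from this configuration a simple closed curve $\gamma$ on $F$ meeting $\pi(L)$ in exactly two points near the crossing $x$, by taking a small arc through (or near) the crossing $x$ together with an arc pushed slightly into the disk region $R$ connecting the two passages of $\partial R$ through $x$. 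This $\gamma$ then separates $\pi(L)$ into two sub-arcs with endpoints on $\gamma$, and I must examine both of them.

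Next I would invoke the reducedness hypothesis. If $\gamma$ is essential on $F$, then condition (3) of Definition \ref{defn:reducible} forces one of the two sub-arcs of $\pi(L)$ cut off by $\gamma$ to contain no crossings — call it $\ell$. Alternatively, if $\gamma$ bounds a disk $E$ on $F$, then primeness of $\pi(L)$ (recall Definition \ref{prime}) forces one side to be a single crossingless arc (when $F$ has positive genus, this is $\pi(L)\cap E$; this is where genus $\geq 1$ is used, paralleling the argument in Proposition \ref{prop:ReducedEdgeRep}). Either way I obtain a crossingless sub-arc $\ell$ of $\pi(L)$ cut off by $\gamma$. Now I run through the same case analysis as in the proof of Proposition \ref{prop:ReducedEdgeRep}: if $\ell$ together with a sub-arc of $\gamma$ bounds a disk I can isotope to reduce intersections and contradict $e(\pi(L),F)>2$ (Proposition \ref{prop:ReducedEdgeRep}); if $\ell$ is a whole component it contradicts that each component has a crossing; and the case where $\ell$ together with an arc of $\gamma$ is essential and non-parallel to $\gamma$ contradicts checkerboard colorability, since $\ell$ being crossingless means the two sides of $\ell$ carry the same $A/B$ label, yet they are distinct corners glued across a crossingless knot arc at $x$ — exactly the "region adjacent to itself across a knot arc" obstruction. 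In every case we reach a contradiction, so $G_A$ has no 1-edge loop.

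The main obstacle I anticipate is being careful about \emph{which} curve $\gamma$ to extract and verifying it genuinely meets $\pi(L)$ transversely exactly twice near the crossing — the 1-edge loop gives a return of a state circle, and one must translate this into a return of the region boundary $\partial R$ and then into an honest simple closed curve on $F$, checking it is not accidentally trivial in a way that makes the argument vacuous. Once $\gamma$ is in hand, the rest is the bookkeeping already carried out in Proposition \ref{prop:ReducedEdgeRep}, so I would keep that part brief and refer back to it. I should also remark at the end that the disk-region hypothesis is exactly what guarantees all circles of $s_A$ and $s_B$ are contractible, completing the verification of geometric adequacy.
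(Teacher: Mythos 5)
Your overall strategy is the paper's: translate a 1-edge loop of $G_A$ at a crossing $x$ into a simple closed curve $\gamma$ on $F$ meeting $\pi(L)$ transversely in exactly two points near $x$ (an arc in the common $A$-region, or equivalently in the state circle, joined to a short arc across the crossing), use Proposition \ref{prop:ReducedEdgeRep} to get $e(\pi(L),F)>2$ and hence force $\gamma$ to be contractible, and then aim for a contradiction with primeness. The identification of the two $A$-corners of $x$ with a single region, the construction of $\gamma$, the $D\mapsto D^{*}$ symmetry for $G_B$, and the remark that disk regions make all circles of $s_A$, $s_B$ contractible are all correct and match the paper.

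The gap is in how you close the contractible case, which is the only case that actually occurs. The curve $\gamma$ can be pushed to either side of the crossing, and whether the crossing $x$ lies inside the disk $E$ that $\gamma$ bounds depends on that choice, which you never control. If $x\notin E$, primeness (Definition \ref{prime}) is simply \emph{satisfied}: $\pi(L)\cap E$ is a single crossingless arc, and none of the three sub-cases you import from Proposition \ref{prop:ReducedEdgeRep} produces a contradiction --- isotoping $\gamma$ off that arc yields a \emph{contractible} curve disjoint from $\pi(L)$, which contradicts neither $e(\pi(L),F)>2$ nor the disk-region hypothesis; the arc is not a whole component of $L$; and the ``essential and non-parallel'' checkerboard sub-case is vacuous for a contractible $\gamma$. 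That case analysis is tailored to an essential curve realizing $e=2$ and does not transfer. What is needed, and what the paper does, is to consider \emph{both} push-offs of $\gamma$ across the crossing: they cobound a small disk containing $x$, so exactly one of the two bounds a disk containing $x$, and for that one the intersection of $\pi(L)$ with the disk contains the 4-valent vertex $x$ and therefore cannot be a single embedded arc, violating primeness. Without this step your argument never actually contradicts the existence of the crossing.
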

\begin{proof}

First, as $\pi(L)$ is alternating on $F$ and all regions of $F\setminus D$ are disks, we have that $s_A$ and $s_B$ must have only contractible circles. 

We need to show that  $G_A$ and $G_B$ have no 1-edge loops.
The  proof is the same for both $G_A$ and $G_B$, so we will focus on $G_A$. Suppose $\pi(L)$ is as in the statement of the lemma, but $G_A$ has at least one 1-edge loop, $\ell$. Then $\ell$ connects a state circle to itself, and $\ell$ crosses $\pi(L)$ exactly twice at a crossing. We may then find some simple arc $\gamma$ in the state circle connecting the two endpoints of $\ell$. But then $\gamma\cup \ell$ is a simple closed curve intersecting $\pi(L)$ exactly twice. By Proposition \ref{prop:ReducedEdgeRep}
we have $e(\pi(L), F)\ge 4$. Thus $\gamma\cup \ell$ must be contractible on $F$. 

By homotoping  $\gamma\cup \ell$  we can get two such simple closed curves, one with the crossing to the left of the curve, and the other with the crossing to the right. One of them would bound a disk such that
the existence of the crossing corresponding to $\ell$ violates the primeness of $\pi(L)$.
Thus $\pi(L)$ must be geometrically $A$-adequate. See Figure \ref{exampleadequate} for an example.
\end{proof}

\subsection{Twist number relations and invariance} 
A \textit{twist region} of an alternating projection $\pi(L)\subset F$  is either 
 a string of bigons of $\pi(L)$  arranged vertex to vertex that is maximal in the sense that no larger string of bigons contains it, or a single crossing adjacent to no bigon.

\begin{defn}
An alternating diagram  $\pi(L)\subset F$ is called \textit{twist reduced} if whenever there is a disc $D\subset F$ such that $\partial D$ intersects $\pi(L)$ exactly four times adjacent to two crossings, then one of the following holds:
\begin{itemize}
\item $D$ contains a (possibly empty) sequence of bigons that is part of a larger twist region containing the two crossings, or
\item $F\setminus D$ contains a disc $D'$, with $\partial D'$ intersecting $\pi(L)$ four times adjacent to the same two crossings as $\partial D$, 
and $D'$ contains a string of bigons that forms a larger twist region containing the original two crossings. 
See Figure \ref{Fig:TwistReduced}.

\end{itemize}

The \textit{twist number} $ t_F(\pi(L))$ of a diagram is the number of twist regions in a  twist-reduced diagram.
\end{defn}

\begin{lem}
\label{lem:twistNumber} Suppose that $\pi(L)$ is a twist-reduced, reduced alternating diagram with twist number $ t_F(\pi(L))$  on a projection surface $F\subset M$ of genus at least 1.
Suppose that $\pi(L)$ is  checkerboard colorable
and all regions of $F\setminus \pi(L)$ are disks. Then, we have
\[|a_{m-1}| + |b_{n+1}| - 2 = t_F(\pi(L)) - \chi(F),\]
where $a_{m-1}$ and $b_{n+1}$ are the second and the penultimate coefficient of the polynomial $J_0(\pi(L))$.
\end{lem}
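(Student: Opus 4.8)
The plan is to feed the formulas of Theorem~\ref{prop:JonesPoly} into two elementary counting identities. First I would invoke Lemma~\ref{conditions}, which guarantees that $\pi(L)$ is geometrically adequate, so that Theorem~\ref{prop:JonesPoly} applies and yields $|a_{m-1}| = e'_A - |s_A| + 1$ and $|b_{n+1}| = e'_B - |s_B| + 1$. Subtracting, the left-hand side of the asserted equality becomes $(e'_A + e'_B) - (|s_A| + |s_B|)$, so it is enough to establish the two identities $|s_A| + |s_B| = c + \chi(F)$ and $e'_A + e'_B = c + t_F(\pi(L))$, where $c=c(\pi(L))$ is the number of crossings; the lemma then follows by subtraction.

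The first identity is an Euler characteristic count. Since $\pi(L)$ is alternating and checkerboard colorable, the state circles of $s_A$ are in bijection with the $A$-colored regions of $F\setminus\pi(L)$ (they are the vertices of the Tait graph $G_A$), and similarly the state circles of $s_B$ correspond to the $B$-colored regions; hence $|s_A| + |s_B|$ is the total number of complementary regions. Regarding $\pi(L)$ as a $4$-valent graph on $F$ with $c$ vertices, $2c$ edges, and all faces disks, Euler's formula gives that this number of regions equals $2c - c + \chi(F) = c + \chi(F)$.

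The second identity is the combinatorial heart of the argument, and I expect it to be the main obstacle. Here I would argue twist region by twist region. Every crossing belongs to a unique twist region, so the crossing numbers $k_\tau$ of the twist regions sum to $c$. For a fixed twist region $\tau$ with $k_\tau$ crossings, I would first note that its $k_\tau-1$ bigons are complementary regions, hence monochromatic by checkerboard colorability, and all of the same color — consecutive bigons sit in diagonally opposite corners of a shared crossing — say this color is $A$. Then $\tau$ contributes to $G_A$ a path of $k_\tau$ edges running through the $k_\tau-1$ bivalent vertices given by its bigons, none of which gets identified in passing to $G'_A$; and it contributes to $G_B$ a bundle of $k_\tau$ edges joining the two regions running alongside $\tau$, each consecutive pair cobounding a bigon disk, so that repeated reduction collapses the whole bundle to a single edge of $G'_B$. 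Thus $\tau$ accounts for exactly $k_\tau + 1$ edges of $G'_A$ and $G'_B$ combined. The delicate point is that no further identifications occur: an edge of $G_A$ is deleted in forming $G'_A$ only if it lies in such a bundle inside a single twist region. For this I would take two edges $e_1, e_2$ of $G_A$ between the same pair of vertices whose union bounds a disk $E\subset F$, push $\partial E$ off the underlying crossings $x_1, x_2$ so that it meets $\pi(L)$ transversely exactly four times near $x_1$ and $x_2$, and invoke twist-reducedness to conclude that $x_1$ and $x_2$ share a twist region. Because we are on a surface, I must also rule out that the curves produced this way are essential or wrap around $F$ unexpectedly; this is handled exactly as in the proof of Lemma~\ref{conditions}, using primeness together with the bound $e(\pi(L),F)\ge 4$ of Proposition~\ref{prop:ReducedEdgeRep}. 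Summing the contribution $k_\tau + 1$ over all $t_F(\pi(L))$ twist regions then yields $e'_A + e'_B = \sum_\tau (k_\tau+1) = c + t_F(\pi(L))$.

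Finally I would combine the two identities: $|a_{m-1}| + |b_{n+1}| - 2 = (e'_A + e'_B) - (|s_A| + |s_B|) = \bigl(c + t_F(\pi(L))\bigr) - \bigl(c + \chi(F)\bigr) = t_F(\pi(L)) - \chi(F)$. The only genuinely non-routine ingredient is the twist-region bookkeeping for $e'_A + e'_B$; the rest is the Euler characteristic count and an appeal to the already-established Theorem~\ref{prop:JonesPoly} and Lemma~\ref{conditions}.
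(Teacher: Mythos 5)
Your proposal is correct and follows essentially the same route as the paper: apply Lemma~\ref{conditions} and Theorem~\ref{prop:JonesPoly}, then combine the Euler characteristic identity $|s_A|+|s_B|=c+\chi(F)$ with the twist-region count $e'_A+e'_B=c+t_F(\pi(L))$. Your treatment of the second identity is in fact somewhat more careful than the paper's, since you explicitly invoke twist-reducedness (together with primeness and $e(\pi(L),F)\ge 4$) to rule out parallel edges of $G_A$ or $G_B$ arising from crossings in distinct twist regions, a point the paper passes over quickly.
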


\begin{proof}

By Lemma \ref{conditions}  $\pi(L)$ is geometrically adequate;  the state graphs $G_A$ and $G_B$ have no 1-edge loops.
Suppose that $\pi(L)$ has $c$ crossings. First note that the twist number is
\[ t_F(\pi(L)) = c - (c-e'_A) - (c-e'_B) = e'_A + e'_B - c.\]

By definition, crossings that correspond to twist region of $\pi(L)$ correspond to edges of $G_A$ or $G_B$ that are parallel; every pair bounds bigon on $F$. Call a twist region  of $\pi(L)$ an $A$ (or $B$) twist region if, in $G_A$ (or $G_B$), all crossings of the twist region are represented by edges such that every pair bounds a bigon on $F$. 
Then note that $c-e'_A$ is exactly the number of edges in $G_A$ that aren't in $G'_A$, and so counts the number of crossings that are in an $A$ twist region except for one for each such twist region (that is represented in $G'_A$). Likewise, $c-e'_B$ is the number crossings in $B$ twist regions minus one for each such twist region. Then $(c-e'_A) + (c-e'_B) = c -  t_F(\pi(L))$.

Next, note that 
\[|s_A| + |s_B| =  c + 2 - 2g(F) = c + \chi(F).\]
Putting these together, along with Lemmas ~\ref{lem:firstCoef} and ~\ref{lem:secondCoeff}, we get:
\begin{align*}
|a_{m-1}| + |b_{n+1}| - 2 &= e'_A + e'_B - |s_A| - |s_B|\\
	&= ( t_F(\pi(L)) + c) - (c + \chi(F)) \\
	&=  t_F(\pi(L))- \chi(F),
\end{align*}
which finishes the proof of the lemma.
\end{proof}

\begin{figure}
\def\svgwidth{0.7\columnwidth}
  \import{Pictures/}{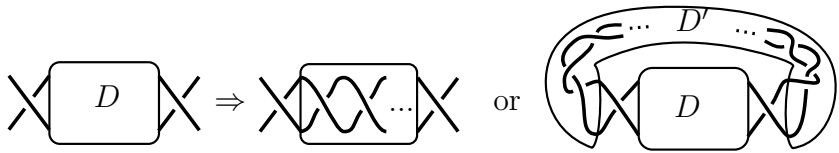}
  \caption{A  twist reduced diagram. Figure modified from \cite{ALSVB}.}
  \label{Fig:TwistReduced}
\end{figure}

It is known that the twist number of a reduced, twist-reduced alternating projection $\pi(L)$ on a 2-sphere in $S^3$ is an isotopy invariant of $L$. This has been proven in two ways. Firstly,  it follows from work of Dasbach and Lin  \cite{VTJPAK}  showing this twist number can be obtained from the Jones polynomial of $L$. Secondly, it follows from the Tait flyping conjecture proved in Menasco and Thistlewaite \cite{ClassAltLinks}, which shows that any two reduced, prime alternating link diagrams are related by a series of flypes. 
Following the approach of \cite{VTJPAK},
we have a generalization of twist number invariance for alternating links in thickened surfaces.
\invariant*
\begin{proof}
By Lemma \ref{lem:twistNumber}, $|a_{m-1}| + |b_{n+1}| - 2+ \chi(F) = t_F(\pi(L))$.
Since $|a_{m-1}|, |b_{n+1}|$ isotopy invariants of $L$ in $F\times[-1,1]$ (Proposition \ref{isinvariant}), the conclusion follows.
\end{proof}

 The Tait flyping conjecture is unknown for links in thickened surfaces. Hence the second method of deducing invariance of the twist number  is not currently available. However, Boden, Karimi, and Sikora were able to show the first two Tait conjectures by proving that, for reduced alternating diagrams in thickened surfaces, the crossing number and the writhe are invariants \cite{ThickTaitConj}. 

In general the twist number of weakly generalized alternating knots is not an invariant. Howie has produced weakly alternating projections of the same knot on a Heegaard torus of in $S^3$
with different twist numbers (e.g. the knot $9_{29},$ is one example) \cite{Howie}. On the other hand no such examples are known for weakly alternating projections on incompressible surfaces.
In the view of this and Corollary 1.3 we ask the
following:

\begin{que}
Let $M$ be a 3-manifold that is closed or has incompressible boundary and $F\subset M$ an incompressible surface.
Suppose that
$\pi(L)$ is  a reduced, twist-reduced, checkerboard colorable, alternating diagram on $F$ where all the regions of $F\setminus \pi(L)$ are disks. 
Is $t_F(\pi(L))$ is an invariant of the isotopy type of $L$ in $M$?
\end{que}

As Howie's examples take place on the compressible Heegaard torus in $S^3$, these do not give an answer to this question.

\subsection{Guts and Kauffman bracket} Here we will prove Theorem ~\ref{thm:GutsAndJones} stated in the introduction. In fact we prove a more general result (Theorem \ref{thm:Gutsgeneral}) in which the assumption that $F$ is incompressible ($r(\pi(L),F)=\infty$) is relaxed to  $r(\pi(L),F)>4$.

Suppose that $D=\pi(L)$ is a weakly generalized alternating diagram on a surface $F\subset M$ such that the regions of $F\setminus \pi(L)$ are all disks.
The projection gives rise to two spanning surfaces of $L$,  the checkerboard surfaces that we will denote by $S_A=S_A(D)$ and $S_B=S_A(D)$. Our convention will be that $S_A$ is constructed
by attaching half twisted bands to the disks bounded by the state circles  $s_A(D)$, where we attach a half twisted band for each crossing of $D,$
so that the band retracts onto the corresponding edge of the graph $G_A$ and the surface $S_A$ retracts to $G_A$.
Similarly we define $S_B$ that retracts onto $G_B$.
See Figure ~\ref{fig:surfaces}.

\begin{figure}[ht]
\includegraphics[scale=.8]{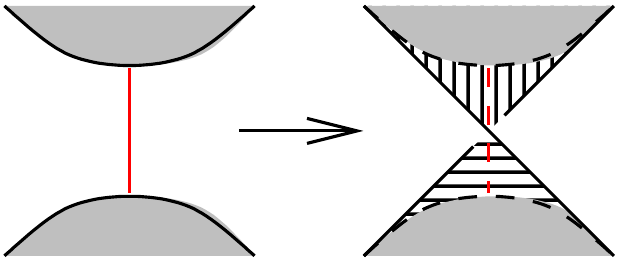}
\hspace{2cm}
\includegraphics[scale=.8]{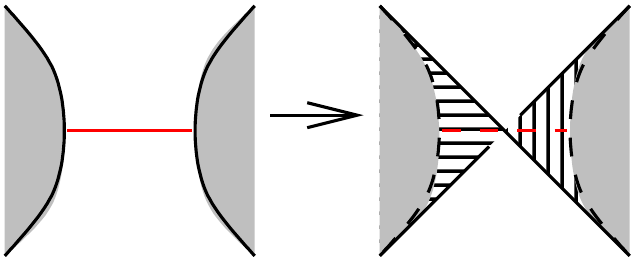}
\label{resolve}
\caption{The construction of $S_A$ and $S_B$. The red lines indicate the edge of $S_A$ and $S_B$ the corresponds to the bands shown.}
\label{fig:surfaces}
\end{figure} 

By \cite[Theorem 3.19]{WGA} the surfaces $S_A$ and $S_B$ are $\pi_1$-essential in the complement of $X=M\setminus L$.
Let $M_{A} = X\cut S_A := X\setminus N(S_A)$ and Let $M_{B} = X\cut S_B := X\setminus N(S_B)$. Recall also that $a_m, a_{m-1}, b_{n+1},$ and $b_n$ are the first two and last two coefficients, respectively, in the polynomial $J_0(\pi(L))$.

\begin{thm}\label{thm:Gutsgeneral} 
Let $M$ be a 3-manifold that is closed or has incompressible boundary and $F\subset M$ a projection surface such that
that $M\setminus N(F)$ is atoroidal
and $\bndry$-anannular.  
Let $\pi(L)$be  a reduced,   alternating diagram on $F$ that is   twist-reduced with twist number $t_F(\pi(L))$.
Suppose that  $\pi(L)$  is checkerboard colorable and all the regions of $F\setminus \pi(L)$ are disks. Suppose, moreover, that  $F$ has genus at least 1 and the representativity satisfies  $r(\pi(L),F)>4$. 
Then,

\begin{enumerate}
\item we have $\chi(\guts(M_A)) = 1-|a_{m-1}|+ \frac{1}{2}\chi(\bndry M)$,
\item we have $\chi(\guts(M_B)) =  1-|b_{n+1}| + \frac{1}{2}\chi(\bndry M)$,
\item we have $t_F(\pi(L))=|a_{m-1}| + |b_{n+1}| - 2+\chi(F).$
\end{enumerate}
\end{thm}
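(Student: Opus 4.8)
The plan is to establish the three formulas by combining the combinatorial input of Section 2 with the geometric decomposition results of \cite{WGA}. Parts (1) and (2) are where the real work lies; part (3) is essentially free once we have Lemma \ref{lem:twistNumber}, so I would dispose of it first. Indeed, by Corollary \ref{reducedisWGA} the hypotheses guarantee that $\pi(L)$ is weakly generalized alternating, and by Lemma \ref{conditions} it is geometrically adequate; Lemma \ref{lem:twistNumber} then gives directly $|a_{m-1}| + |b_{n+1}| - 2 = t_F(\pi(L)) - \chi(F)$, which is (3). (Note that the genus hypothesis is needed here and that, strictly, Lemma \ref{lem:twistNumber} already assumed reduced and twist-reduced, so nothing new is required.) The bulk of the proof is therefore parts (1) and (2), which by the symmetry $D \mapsto D^{*}$ (switching all crossings, as in the proof of Lemma \ref{lem:firstCoef}) reduce to establishing (1) alone.

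For part (1), the plan is to compute $\chi(\guts(M_A))$ directly from the JSJ-type decomposition of $M_A = X \cut S_A$ and match it to the graph-theoretic quantity $e'_A - |s_A| + 1 = |a_{m-1}|$ coming from Theorem \ref{prop:JonesPoly}. First I would recall, from \cite{WGA}, that $S_A$ is $\pi_1$-essential in $X = M \setminus L$, so that cutting along it yields a manifold $M_A$ on which the guts (the non-fibered, non-$I$-bundle part of the JSJ decomposition of the pared manifold) is well-defined; and I would recall the Agol--Storm--Thurston / Futer--Kalfagianni--Purcell style formula that expresses $\chi(\guts(M_A))$ as $\chi(S_A)$ minus a correction term accounting for the characteristic $I$-bundle and any essential product disks. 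The key point, which I would extract from \cite{WGA} and the structure of the spine $G_A$, is that the essential product disks of the characteristic $I$-bundle of $M_A$ are exactly the bigon product disks coming from parallel families of edges of $G_A$ — precisely the edges that get collapsed in passing from $G_A$ to the reduced graph $G'_A$. Since $S_A$ deformation retracts onto $G_A$ we have $\chi(S_A) = |s_A| - c$ where $c$ is the crossing number, and collapsing each maximal parallel family contributes the correction, so that the "de-bigoned" surface has Euler characteristic $|s_A| - e'_A$. Accounting carefully for the boundary of $M$ via the pared locus gives the $\frac{1}{2}\chi(\bndry M)$ term, and one lands on $\chi(\guts(M_A)) = |s_A| - e'_A + 1 + \frac{1}{2}\chi(\bndry M) = 1 - |a_{m-1}| + \frac{1}{2}\chi(\bndry M)$, using Theorem \ref{prop:JonesPoly} for the last equality.

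The hardest step, and the one I would spend the most care on, is showing that the characteristic $I$-bundle $\Gamma_A$ of $M_A$ really is accounted for by exactly the bigon/parallel-edge disks — i.e. that there are no "extra" essential product disks or annuli, and no fibered pieces, beyond those forced by the twist regions. This is where the hypotheses $r(\pi(L), F) > 4$ (representativity), primeness, twist-reducedness, and atoroidality/$\bndry$-anannularity of $M \setminus N(F)$ all get used: representativity and primeness control how essential annuli and product disks can meet $F$ and the crossing balls, twist-reducedness ensures each bigon disk genuinely sits inside a twist region so the count $c - e'_A$ is the honest number of $I$-bundle disks, and the atoroidal/anannular hypotheses on $M \setminus N(F)$ rule out JSJ tori or boundary annuli that are not visible diagrammatically. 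I would organize this step by invoking the analysis of the characteristic submanifold of $S_A$ carried out in \cite{WGA} (the normal-form / "spanning surface in a weakly generalized alternating complement" machinery), and then checking that, under our extra reducedness hypotheses, the only product regions that survive are the bigon stacks — this is the direct analogue of the $S^3$ argument in \cite{Guts} but requires importing the surface-in-$M$ control from \cite{WGA} rather than working in a diagram disk. Once this identification is in place, parts (1) and (2) follow by the Euler characteristic bookkeeping above, and (3) has already been recorded, completing the proof.
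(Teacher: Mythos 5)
Your proposal follows essentially the same route as the paper: part (3) is dispatched via Lemma \ref{lem:twistNumber}, and part (1) is obtained by matching the guts computation of \cite{WGA} for the checkerboard surface $S_A$ against $|a_{m-1}| = e'_A - |s_A| + 1$ from Theorem \ref{prop:JonesPoly} --- the paper simply quotes \cite[Theorem 6.6]{WGA} in the form $\chi(\guts(M_A)) = \chi(F) + \frac{1}{2}\chi(\bndry M) - |s'_B|$ and converts $\chi(F) - |s'_B|$ into $|s_A| - e'_A$ using the cell decomposition of $F$ by $G_A$ together with twist-reducedness, which is the same bookkeeping you carry out via $\chi(S_A)$ and the identification of the characteristic $I$-bundle with the bigon product disks. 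One arithmetic slip to fix: since $|a_{m-1}| = e'_A - |s_A| + 1$ we have $1 - |a_{m-1}| = |s_A| - e'_A$, so you should land on $\chi(\guts(M_A)) = |s_A| - e'_A + \frac{1}{2}\chi(\bndry M)$ rather than $|s_A| - e'_A + 1 + \frac{1}{2}\chi(\bndry M)$; your claimed ``last equality'' is off by one.
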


Let us first explain how to  deduce Theorem ~\ref{thm:GutsAndJones}: as discussed earlier, in the case that $F$ is incompressible in $M$ we  have $r(\pi(L),F)=\infty$.
Thus, in particular, the condition $r(\pi(L),F)>4$ is satisfied and Theorem ~\ref{thm:GutsAndJones} is a special case of Theorem \ref{thm:Gutsgeneral}. \qed

\begin{proof} First note that by Lemma ~\ref{conditions},   $D=\pi(L)$ is geometrically $A$-adequate and geometrically $B$-adequate.
We will give the proof for part (1) and $M_A$. The proof works the same, after swapping $S_A$ and $S_B$ to give part (2). Finally, part (3) follows from Lemma
~\ref{lem:twistNumber}.

The graph $G_A$ gives a cellular decomposition of the surface $F$. The number of $0$-cells is the number of the vertices $G_A$, denoted by $|s_A|$, and  the number of 1-cells is the  number of edges $e_A=c(\pi(L))$. The number
of 2-cells is the number of complementary regions of $G_A$ which is the same as the number $|s_B|$ of vertices of $G_B$. If we consider $\pi(L)$ as a 4-valent graph on $F$ we can label the components
of $F\setminus \pi(L)$ by $A$ or $B$ according to whether  they correspond to a vertex of $G_A$ or $G_B$. We will refer to these as $A$-regions and $B$-regions, respectively.
Now let $|s'_B|$ denote the number of non-bigon $B$-regions and recall that $e'_A$ denotes the number of edges in the reduced graph $G'_A$.
We have,

\begin{equation}
\label{eqn:EulerCharResult}
\chi(F) = |s_A| - e_A+ |s_B| = |s_A| - e'_A+ |s'_B|,
\end{equation}

where the second equality follows since, by definition and the fact that $D$ is twist-reduced, the number of edges we remove from $G_A$ to obtain $G'_A$ is exactly the number of bigon $B$-regions.
Equation (\ref{eqn:EulerCharResult}) gives the following.

\begin{equation}
\label{eqn:EulerCharResult1}
 \chi(F) - |s'_B| = |s_A| - e_A'.
\end{equation}

Since, as we mentioned above, $D$ is geometrically $A$-adequate by Theorem \ref{prop:JonesPoly} we have

\begin{equation} 
|s_A| - e_A' =1-|a_{m-1}|= |a_{m}|-|a_{m-1}|.
\label{eqn:coefsrel}
\end{equation}

By Corollary \ref{reducedisWGA},   $D$ is weakly generalized alternating. Now we can apply ~\cite[Theorem 6.6]{WGA}
to $D$ to conclude that \begin{equation}
\chi(\guts(M_A)) = \chi(F)  + \frac{1}{2} \chi(\bndry M) - |s'_B|.
\label{eqn:HPGuts}
\end{equation}

Now   combining Equation (\ref{eqn:HPGuts}) with Equations (\ref{eqn:EulerCharResult1}) and (\ref{eqn:coefsrel}), we 
get

$$\chi(\guts(M_A)) =1-|a_{m-1}|+ \frac{1}{2} \chi(\bndry M),$$
which is part (1) of the theorem.

We will now sketch the proof of Equation \ref{eqn:HPGuts}, referring the reader to \cite{WGA} for precise definitions and details. We do this not only for reasons of completeness, but because it is interesting
to see the correspondence between the combinatorics in the calculation of $|a_{m-1}|$ from the proof of \ref{prop:JonesPoly} and these involved in the calculation of $\chi(\guts(M_A))$.
On one hand, edges that  are parallel on $G_A$ (i.e that co-bound a disk on $F$) don't contribute due to cancellations  Kaufman state sum expression of $|a_{m-1}|$.
On the other hand  strings of parallel edges on $G_A$  correspond components of $I$-bundle pieces of the JSJ-decomposition$M_A$ and they don't contribute $\chi(\guts(M_A))$.
  
Setting  $\tilde{S}_A = \bndry N(S_A)$,   the  \textit{parabolic locus} $P$ is $\bndry M_A \cap \bndry N(L)$. Considering $\pi(L)$ as a 4-valent graph on $F$, the authors in \cite{WGA} define a
 \textit{chunk decomposition} of $M_A$: this decomposes $M_A$ into two 
 compact, oriented, irreducible 3-manifolds with boundary, say  $C_1, C_2$,  each containing a copy $F$ as a boundary component (and possibly more boundary components coming from $\bndry M$). 
 The component of $\partial C_i$ that corresponds to $F$ comes equipped with a checkerboard coloring with the regions of $F~\setminus ~\pi(F)$ called  \textit{faces}.
 The  chunks are glued together along the $B$ labeled faces. The decomposition generalizes  previously known polyhedral decompositions constructed from  alternating and adequate link projections
 in $S^3$  (see, for example,  \cite{Guts} and references therein). 
  Even though the chunks are not simply connected,
 \cite{WGA}  shows that techniques that were used for polyhedral decompositions generalize and adapt in the setting of chunks.

Recall that $M\setminus N(F)$ is atoroidal
and $\bndry$-anannular. 
By the annulus  version of  JSJ-decomposition one can cut $M_A$ along a collection of essential annuli that are disjoint from $P$ into $I$-bundles, Seifert fibered pieces and hyperbolic pieces which are the ones that form the guts. Seifert fibered pieces turn out to be solid tori and as such they don't contribute to the Euler characteristic computation.

Let $R$ be an essential annulus  in $M_A$, disjoint from $P$ with $\bndry R \subset \tilde{S}_A$.  Such an annulus $R$ is either
 \textit{parabolically compressible} or not, in which case they are called  \textit{parabolically incompressible}.

If $R$ is parabolically incompressible, then \cite[Lemma 6.9]{WGA} argues that $F$ must be a 2-sphere,  contradicting the assumption of $F$ having genus at least 1
 
Suppose now that $R$ is parabolically compressible. This means that
 there is a disk $D$ with interior disjoint from $R$, with $\bndry D$ meeting $R$ in an essential arc $\alpha$ on $R$, and with $\bndry D\setminus \alpha$ lying on $\tilde{S}_A\cup P$, with $\alpha$ meeting $P$ transversely exactly once. 
If we do surgery along such a disk we obtain an \textit{essential product disk}: these are 
disks meeting $P$ transversely exactly twice, with boundary otherwise on $\tilde{S}_A$. Such disks are known to correspond to  $I$-bundle components of above mentioned JSJ-decomposition (see \cite[Definition 4.5]{Guts} or \cite[Definition 6.7]{WGA}).

Now let us look at an essential product disk $E$ caused by surgering $R$.
If it meets $S_B$, then $S_B$ cuts $E$ into sub-rectangles $E_1,\cdots, E_n$. By looking how such rectangles must sit in the diagram and in the chunk decomposition, one can show that $E$ must be boundary parallel, a contradiction to $E$ being essential. 

However, if $E$ doesn't run through $S_B$, then $\bndry E$ must meet the chunk in two $A$-faces and two $B$-faces, and so $\bndry E$ must meet $P$ exactly four times. Such an $E$ is parallel into $F$. However, as $D$ is twist reduced, this implies $\bndry E$ contains a series of $B$-bigons.
\vskip 0.02in

{\underline{{\it {Case 1:}}}}\ First suppose that we don't have $B$-regions that are bigons. Then $\guts(M_A) = M_A$. Recall that $M_A$ is obtained by $C_1$ and $C_2$, where we glue these chunks together along $B$-labeled faces. Then, as $\chi(C_i) = \frac{1}{2}\chi(\bndry C_i)$, we must have
\begin{align*}
\chi(C_1) &= \frac{1}{2}\chi(F) + \frac{1}{2}\chi(\bndry M|_{C_1})\\
\chi(C_2) &= \frac{1}{2}\chi(F) + \frac{1}{2}\chi(\bndry M|_{C_2}).
\end{align*}
Gluing the chunks together along white faces will add their Euler characteristics together, and subtract one for every $B$-face we glue along. As there are no white bigons, we glue along $|s_B|=|s'_B|$ such faces, and so:
\[\chi(\guts(M_A)) = \chi(F) + \frac{1}{2}\chi(\bndry M) - |s'_B|.\]

{\underline{{\it {Case 2:}}}}\ If $F\setminus \pi(L) $ has  $B$-regions that are bigons, then each such bigon will form a quad, with two
sides on $P$ and two sides on $\tilde{S}_A$. These will give essential product disks and thus $I$-bundle parts.
The existence of $I$-bundles leads to parabolically compressible annuli and, as mentioned above, to essential product disks. All the essential product disks parabolically compress
to the strings of the ones corresponding to bigons   (see\cite[Figure 4.2]{Guts}). Surgering along one of these basic essential product disks  increases the Euler characteristic of the $I$-bundle sub-manifold by one and it doesn't change the guts.
After we remove all $B$-bigons of $\pi(L)$, we have replaced each $B$ twist region by a single crossing and we have eliminated all the $I$-bundle components. This has modified $S_A$ into a new surface $S'_A$
and $\guts(M_A)$ is the same as the guts of $S'_A$. But now we have no $B$-bigon regions left, and the $B$-regions of the new link projection are exactly the non-bigon  $B$-regions of 
$\pi(L)$, that is exactly $|s'_B|$. 
\end{proof}
\medskip

\section{Relations with hyperbolic geometry invariants}

Let $D=\pi(L)$ be a reduced, alternating link diagram on a surface $F\subset M$, such that $M\setminus L$ is hyperbolic.
In this section we show that the skein theoretic quantities $|a_{m-1}(\pi(L))|$, $ |b_{n+1}(\pi(L))| $ provide bounds on the volume of the complement $M\setminus L$.
The relations with volume come from two sources: first, by a result of Agol, Storm and Thurston \cite{AST}
the negative Euler characteristic of the guts of an essential surface in a hyperbolic 3-manifold $M$ bounds linearly the volume of $M$ from below.  
We will apply this result to the surfaces $S_A$, $S_B$ associated to projections of weakly generalized alternating links.
Second, by work of Kalfagianni and Purcell \cite{ALSVB},  if $F$ is Heegaard torus or $M$ is a thickened surface,   
the twist number of weakly generalized alternating projections provides two sided bounds of their volume. 

We prove the following theorem which, in particular, implies Theorem \ref{thm:volume} stated in the Introduction.

\begin{thm}
\label{thm:volumegeneral} Let $M$ be a 3-manifold that is closed or has incompressible boundary and $F\subset M$ a projection surface such that
that $M\setminus N(F)$ is atoroidal
and $\bndry$-anannular.  Let $D=\pi(L)$ be a reduced and  twist-reduced   alternating diagram on $F,$ that is checkerboard colorable and all the regions of $F\setminus D$ are disks.
Suppose, moreover, that  $F$ has genus at least 1 and the representativity satisfies  $r(D,F)>4$. 
Then $L$ is hyperbolic
and we have
$$\vol(M\setminus L) \ \geq v_8\  
{\rm{max}}\{ |a_{m-1}|, \   |b_{n+1}| \}-1-\frac{1}{2}\chi(\bndry M),$$
where $a_{m-1},   b_{n+1}$ are the second and penultimate  coefficient of the polynomial $J_0(\pi(L))$, and  $v_8=3.66386\cdots$ is the volume of a regular ideal octahedron.
\end{thm}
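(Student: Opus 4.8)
\medskip
\noindent\textbf{Proof proposal.}
The plan is to deduce the bound by feeding the Euler characteristic formulas of Theorem~\ref{thm:Gutsgeneral} into the volume estimate of Agol--Storm--Thurston~\cite{AST}, once we know that $M\setminus L$ is hyperbolic and that its checkerboard surfaces are essential.

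First I would establish hyperbolicity. By Corollary~\ref{reducedisWGA} the diagram $D=\pi(L)$ is weakly generalized alternating, and the remaining hypotheses ($M$ closed or with incompressible boundary, $M\setminus N(F)$ atoroidal and $\partial$-anannular, $F$ of genus at least $1$, and $r(D,F)>4$) are exactly those under which the hyperbolization theorem of Howie and Purcell~\cite{WGA} applies. Hence the interior of $X:=M\setminus L$ carries a complete finite-volume hyperbolic metric, which also gives the first assertion of the theorem. Next, by \cite[Theorem~3.19]{WGA} the checkerboard surfaces $S_A$ and $S_B$ are $\pi_1$-essential in $X$; writing $M_A=X\cut S_A$ and $M_B=X\cut S_B$, the guts $\guts(M_A)$ and $\guts(M_B)$ are then defined via the annulus/JSJ theory used in the proof of Theorem~\ref{thm:Gutsgeneral}.

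The core of the argument is the inequality of \cite{AST}, applied as in \cite[Section~9]{Guts}: for an essential surface $S$ in a finite-volume hyperbolic $3$-manifold $N$ one has $\vol(N)\ge v_8\cdot\bigl(-\chi(\guts(N\cut S))\bigr)$. Applying this with $N=X$ to $S_A$ and then to $S_B$, and substituting
\[\chi(\guts(M_A))=1-|a_{m-1}|+\frac{1}{2}\chi(\partial M),\qquad \chi(\guts(M_B))=1-|b_{n+1}|+\frac{1}{2}\chi(\partial M)\]
from Theorem~\ref{thm:Gutsgeneral}, I obtain
\[\vol(M\setminus L)\ \ge\ v_8\bigl(|a_{m-1}|-1\bigr)-\frac{v_8}{2}\chi(\partial M)\qquad\text{and}\qquad \vol(M\setminus L)\ \ge\ v_8\bigl(|b_{n+1}|-1\bigr)-\frac{v_8}{2}\chi(\partial M).\]
Since $X$ is hyperbolic and $M$ is closed or has incompressible boundary, every component of $\partial M$ is a torus or a surface of negative Euler characteristic, so $\chi(\partial M)\le 0$ and therefore $-\frac{v_8}{2}\chi(\partial M)\ge -\frac{1}{2}\chi(\partial M)$. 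Taking the larger of the two displayed estimates yields
\[\vol(M\setminus L)\ \ge\ v_8\bigl(\max\{|a_{m-1}|,\,|b_{n+1}|\}-1\bigr)-\frac{1}{2}\chi(\partial M),\]
which is the claimed estimate; since $|a_m|=|b_n|=1$ by Lemma~\ref{lem:firstCoef}, it is also exactly the inequality recorded in Theorem~\ref{thm:volume}.

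The step I expect to require the most care, and would write out in detail, is the invocation of \cite{AST}: that estimate is classically phrased for manifolds with torus cusps and two-sided essential surfaces, whereas here $\partial M$ may contain higher-genus components and $S_A,S_B$ may be one-sided. I would handle this in the spirit of \cite{Guts} and \cite{WGA} --- passing to the (possibly twisted) $I$-bundle neighborhoods of $S_A$ and $S_B$, using the relative/pared form of the AST inequality, and noting that the term $\frac{1}{2}\chi(\partial M)$ appearing in Theorem~\ref{thm:Gutsgeneral} is precisely the correction accounting for non-toroidal boundary. All the genuinely delicate work --- matching the cancellations in the Kauffman state-sum computation of $|a_{m-1}|$ with the $I$-bundle pieces in the chunk decomposition of $M_A$ --- is already contained in the proof of Theorem~\ref{thm:Gutsgeneral}, so the present theorem amounts to assembling that identity with the AST bound and the hyperbolicity input from \cite{WGA}.
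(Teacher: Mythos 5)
Your proposal follows the paper's own proof essentially verbatim: Corollary~\ref{reducedisWGA} to get a weakly generalized alternating diagram, \cite{WGA} for hyperbolicity and $\pi_1$-essentiality of $S_A$, $S_B$, the Agol--Storm--Thurston bound applied to $M_A$ and $M_B$, and substitution of the guts formulas from Theorem~\ref{thm:Gutsgeneral}. Your explicit handling of the passage from $-\tfrac{v_8}{2}\chi(\partial M)$ to $-\tfrac{1}{2}\chi(\partial M)$ using $\chi(\partial M)\le 0$ is a point the paper compresses into ``the result follows,'' so the write-up is correct and, if anything, slightly more careful on that step.
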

\begin{proof}

By Corollary \ref{reducedisWGA},   $D$ is weakly generalized alternating. 
Let $S_A$ and $S_B$ denote the checkerboard surfaces of the projection.
 By ~\cite[Theorem 1.1]{WGA},  $S_A$, $S_B$ are $\pi_1$-essential in $X=M\setminus L$, and  $X$ is hyperbolic.
By cutting the link complement along $S_A$ and $S_B$ we obtain manifolds  $M_A = X \cut S_A$ and $M_B = X \cut S_B$, respectively. 
By  ~\cite[Theorem 9.1]{AST} we have
$$\vol(M\setminus L) \ \geq  -  v_8 \  \chi(\guts(M_A)), \ \ {\rm and} \ \  \vol(M\setminus L) \ \geq  -  v_8 \  \chi(\guts(M_B)).$$

Since we assumed that $\pi(L)$ is reduced by Lemma \ref{conditions}, it is geometrically $A$-adequate and $B$-adequate. By Theorem \ref{thm:Gutsgeneral}
we have

$$ \chi(\guts(M_A))=1- |a_{m-1}|+\frac{1}{2}\chi(\bndry M) \  \ {\rm and} \  \  \chi(\guts(M_B))=1- |b_{n+1}|+\frac{1}{2}\chi(\bndry M).
$$

Thus we obtain

$$\vol(M\setminus L) \ \geq    v_8 \left( |a_{m-1}|  - 1 -\frac{1}{2}\chi(\bndry M)\right),$$
 and
 
 $$\vol(M\setminus L) \ \geq   v_8 \left( |b_{n+1}| - 1 - \frac{1}{2}\chi(\bndry M)\right),$$
and the result follows.
\end{proof}

To see how Theorem \ref{thm:volume}   follows note that if $F$ is incompressible the hypothesis $r(D,F)>4$ is satisfied.

Next we discuss two  special cases where  the quantity
$  |a_{m-1}| + |b_{n+1}|-2 ,$
of Theorem ~\ref{thm:volumegeneral} also provides an upper bounds of the volume.
The first result concerns weakly generalized alternating knots on a Heegaard torus.

\begin{cor}\label{Cor:TorAltUpperBound} Let $F$ be a Heegaard torus $F$ in $M=S^3$, or in a lens space $M=L(p,q)$.
Let $D=\pi(L)$ be a reduced and twist-reduced   alternating diagram on $F,$ that is checkerboard colorable and all the regions of $F\setminus D$ are disks.
Suppose, moreover that the representativity satisfies  $r(D,F)>4$. 
Then $M\setminus L $ is hyperbolic, and
\[  \frac{v_8}{2}\  \left( |a_{m-1}| + |b_{n+1}|-2\right)  \leq \vol(M\setminus L) < 10\, v_4\cdot \left( |a_{m-1}| + |b_{n+1}|-2\right),\]
where $v_4= 1.01494\dots$ is the volume of a regular ideal tetrahedron.
\end{cor}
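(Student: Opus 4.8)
The plan is to assemble the statement from pieces already in place: the lower bound is the $F=T^2$ case of Theorem~\ref{thm:volumegeneral} (equivalently, Theorem~\ref{thm:Gutsgeneral} combined with Agol--Storm--Thurston), while the upper bound comes from the twist-number volume estimate of Kalfagianni--Purcell \cite{ALSVB}, with Theorem~\ref{thm:Gutsgeneral}(3) used to rewrite the twist number in terms of $|a_{m-1}|$ and $|b_{n+1}|$. First I would verify that the hypotheses apply. A Heegaard torus $F$ in $S^3$ or $L(p,q)$ has complement $M\setminus N(F)$ equal to a disjoint union of two solid tori, which are atoroidal and $\bndry$-anannular; $F$ has genus $1$; $r(D,F)>4$ by hypothesis; and $\bndry M=\emptyset$, so $\chi(\bndry M)=0$. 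By Corollary~\ref{reducedisWGA}, $D$ is weakly generalized alternating, so all the hypotheses of Theorems~\ref{thm:Gutsgeneral} and~\ref{thm:volumegeneral} are met and in particular $M\setminus L$ is hyperbolic.

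For the lower bound I would work directly with the guts. By Theorem~\ref{thm:Gutsgeneral}(1)--(2), using $\chi(\bndry M)=0$, we get $\chi(\guts(M_A))=1-|a_{m-1}|$ and $\chi(\guts(M_B))=1-|b_{n+1}|$. Applying \cite[Theorem~9.1]{AST} to the $\pi_1$-essential surfaces $S_A$ and $S_B$ gives $\vol(M\setminus L)\ge -v_8\,\chi(\guts(M_A))$ and $\vol(M\setminus L)\ge -v_8\,\chi(\guts(M_B))$; adding these two inequalities yields
\[ 2\,\vol(M\setminus L)\ \ge\ v_8\bigl(|a_{m-1}|+|b_{n+1}|-2\bigr), \]
which is the left-hand inequality. (Alternatively one can extract this from the $\max$ form of Theorem~\ref{thm:volumegeneral} using $\max\{x,y\}\ge (x+y)/2$ together with $v_8>1$, but averaging the two Agol--Storm--Thurston inequalities is cleaner.)

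For the upper bound I would invoke Theorem~\ref{thm:Gutsgeneral}(3): since $\chi(T^2)=0$, the twist number satisfies $t_F(\pi(L))=|a_{m-1}|+|b_{n+1}|-2$. A reduced, twist-reduced, checkerboard colorable alternating diagram with disk regions and $r(D,F)>4$ on a Heegaard torus of $S^3$ or a lens space falls within the scope of the Kalfagianni--Purcell volume bound \cite{ALSVB}, which gives $\vol(M\setminus L)<10\,v_4\,t_F(\pi(L))$; substituting the identity for $t_F(\pi(L))$ completes the argument. The only step that takes genuine care is matching hypotheses: one must confirm that the diagram conditions assumed here are exactly those under which the upper bound of \cite{ALSVB} is stated — this is the source of the constant $10\,v_4$ and of the restriction to Heegaard tori in $S^3$ or lens spaces — and that the complement of a Heegaard torus is indeed atoroidal and $\bndry$-anannular so that Theorems~\ref{thm:Gutsgeneral} and~\ref{thm:volumegeneral} genuinely apply. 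Everything else is direct substitution.
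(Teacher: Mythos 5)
Your proposal is correct, and the upper bound is handled exactly as in the paper: quote the Kalfagianni--Purcell estimate and substitute $t_F(\pi(L))=|a_{m-1}|+|b_{n+1}|-2$ from the twist-number formula (Lemma~\ref{lem:twistNumber}, equivalently Theorem~\ref{thm:Gutsgeneral}(3), with $\chi(T^2)=0$). The only place you diverge is the lower bound: the paper simply cites \cite[Corollary 1.5]{ALSVB}, which already packages $\frac{v_8}{2}\,t_F(\pi(L))\le\vol(M\setminus L)$, and then substitutes the twist-number identity; you instead rederive that bound internally by computing $\chi(\guts(M_A))=1-|a_{m-1}|$ and $\chi(\guts(M_B))=1-|b_{n+1}|$ from Theorem~\ref{thm:Gutsgeneral} (using $\chi(\bndry M)=0$), applying Agol--Storm--Thurston to each checkerboard surface, and averaging the two inequalities. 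The two routes are mathematically the same underneath --- as the paper itself remarks, the lower bound of \cite{ALSVB} is obtained from the guts of the checkerboard surfaces via \cite{WGA} --- but your version has the merit of being self-contained in the paper's own results and of making explicit why the constant is $\frac{v_8}{2}$ rather than $v_8$. Your hypothesis checks are also right and worth keeping: the complement of a Heegaard torus is two solid tori, which are atoroidal and contain no essential annuli (every incompressible properly embedded annulus in a solid torus is boundary-parallel), so Theorems~\ref{thm:Gutsgeneral} and~\ref{thm:volumegeneral} genuinely apply with $F$ of genus $1$ and $r(D,F)>4$.
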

\begin{proof}By Corollary \ref{reducedisWGA} the projection $\pi(L)$ is weakly generalized alternating.  Hyperbolicity follows from
\cite[Theorem 1.1]{WGA}. By ~\cite[Corollary 1.5]{ALSVB}, which also relies on \cite{WGA} for the lower bound,  we have

\[  \frac{v_8}{2}\  t_F(\pi(L)) \leq \vol(M\setminus L) < 10\, v_4\cdot t_F(\pi(L)),\]
where $ t_F(\pi(L))$ is the twist number of $\pi(L)$. 
By Lemma ~\ref{lem:twistNumber}
$t_F(\pi(L))=|a_{m-1}| + |b_{n+1}| - |a_{m}| - |b_{n}|=|a_{m-1}| + |b_{n+1}| -2$ and the result follows.
\end{proof}

Our second result is Theorem ~\ref{Thm:FxIUpperBound} which we now prove.
\begin{proof}By Corollary \ref{reducedisWGA} the projection $\pi(L)$ is weakly generalized alternating. Hyperbolicity follows from
\cite[Theorem 1.1]{WGA}, where when $F\neq T^2$ the hyperbolic structure is chosen so that non-torus boundary components of $M\setminus L$ are totally geodesic.
By ~\cite[Theorem 1.4]{ALSVB}, which relies on \cite{WGA} for the lower bound, we have
\begin{equation}
\frac{v_8}{2}\ t_F(\pi(L)) \leq \vol(Y-K) < 10\, v_4\cdot t_F(\pi(L)), 
\label{eq:tn}
\end{equation}
if $F=T^2$, and we have

\[  \frac{v_8}{2}\ ( t_F(\pi(L))- 3\chi(F)) \leq \vol(Y-K) < 6\,v_8 \cdot t_F(\pi(L)),\]
if $F$ has genus bigger than one. Thus in both cases the result follows immediately by Lemma ~\ref{lem:twistNumber}.
\end{proof}
\begin{rem}{\rm 
Theorem \ref{Thm:FxIUpperBound} is the analogue of the ``volumish theorem" of \cite{VTJPAK} for alternating links in thickened surfaces, where the authors relay on the two sided bounds volume bounds in terms
of the twist number of alternating projections given  by Lackenby  \cite{VHLC}.}
\end{rem}

\begin{rem} {\rm  In \cite{CK}, Champanerkar and Kofman show that if $\pi(L)$ is an alternating projection as in 
Theorem ~\ref{Thm:FxIUpperBound}, then $\pi(L)$ admits two sided linear bounds in terms of coefficients of a specialization
of the Krushkal polynomial \cite{Slava}. Then they also combine this with equation (\ref{eq:tn}) to conclude that 
Krushkal's polynomial also gives two-sided bounds of the volume of alternating links in thickened surfaces.  Krushkal informed the authors
that Andrew Will \cite{WA} also obtained a similar result.  Their approach, however, doesn't lead to a proof of invariance of $t_F(\pi(L))$.}
\end{rem}

\begin{rem} {\rm  In \cite{CVWGA} Bavier shows that if  $M$ is closed  and $\pi(K)$  is a weakly generalized  alternating knot projection, that is twist-reduced,  on a surface $F\subset M$ of genus at least 1,  then
 the twist number $t_F(\pi(K)$ provides 2-sided bounds on the cusp volume of $M\setminus K.$ We close the section by noting that,  by Lemma ~\ref{lem:twistNumber}, Theorem 1.1 of  \cite{CVWGA}
 and the resulting applications to Dehn filling given therein can also be stated in terms of the  skein theoretic quantity $J_0(\pi(K))$.}
 \end{rem}

\bibliographystyle{hamsplain}
\bibliography{WAGFundamentarev}

\end{document}